\documentclass[12pt, reqno]{amsart}
\usepackage[utf8]{inputenc}
\usepackage{amsthm, amsmath, amssymb, bm,diffcoeff}

\usepackage{verbatim}
\usepackage{graphicx}
\usepackage[linesnumbered,ruled,vlined]{algorithm2e}
\usepackage{tikz}
\usetikzlibrary{decorations.pathreplacing}
\usepackage{derivative}
\usepackage{mathtools}

\usepackage{xifthen}
\usepackage[colorlinks=true,
linkcolor=blue,citecolor=blue,
urlcolor=blue]{hyperref}
\usepackage[margin=1.2in]{geometry}
\usepackage{extarrows}
\usepackage[shortlabels]{enumitem}

\usetikzlibrary{calc,shapes, backgrounds}
\allowdisplaybreaks

\usepackage{tikz}   % 放在导言区
\usepackage{float}
\usetikzlibrary{matrix,arrows} 

\newtheorem{theorem}{Theorem}[section]
\newtheorem{lemma}[theorem]{Lemma}
\newtheorem{corollary}{Corollary}
\newtheorem{definition}{Definition}

\newtheorem{remark}{Remark}

\newtheorem{proposition}{Proposition}
\newtheorem{example}{Example}

\newtheorem{question}[theorem]{Question}

\usepackage{xcolor}

\newcommand{\norm}[1]{\left\lVert#1\right\rVert}

\usepackage{booktabs}
\usepackage{multirow}
\usepackage{tabularx}
\usepackage{makecell}
\usepackage[table]{xcolor}
\definecolor{lightgray}{gray}{0.95}
\usepackage{standalone}
\usepackage{amsmath}
\usepackage{graphicx}

\title[Discrete Einstein metrics on trees]{Discrete Einstein metrics on trees}

\author{
Shuliang Bai}
\thanks{Beijing Yanqi Lake Institute of Mathematical Sciences and Applications, China. 
Email address: baishuliang@bimsa.cn, corresponding author}
\author{Haoxuan Cheng}
\thanks{School of Mathematical Sciences, Fudan University, Shanghai 200433, China. Email address: hxcheng25@m.fudan.edu.cn} 
\author{Bobo Hua}
\thanks{School of Mathematical Sciences, LMNS, Fudan University, Shanghai
200433, China.  
Email address: bobohua@fudan.edu.cn.}

\date{\today}

\begin{document}

\begin{abstract}

We establish the existence and uniqueness of discrete Einstein metrics on trees under Lin-Lu-Yau Ricci curvature using Perron-Frobenius theory. We derive the quantitative bounds for the largest eigenvalue of the associated Ricci matrix, including sharp asymptotics for regular trees and a universal upper bound depending only on the maximum
degree. Turning to structural properties, notably, the existence of a positive-curvature Einstein metric implies the tree must be a caterpillar. Furthermore, these positive-curvature metrics exhibit radial monotonicity, with edge weights decreasing strictly away from the maximal edge.

\end{abstract}
\maketitle
%\keywords{Discrete Einstein metric; Ricci matrix; Largest eigenvalue, Monotonicity}

%\noindent \textbf{Mathematics Subject Classification (2020):} 53C21,  05C50, 53C25, 05C05, 31C20, 05C76

\section{Introduction}

A primary goal in Riemannian geometry is determining if a manifold is Einstein, meaning it possesses constant Ricci curvature, i.e.

$$Ric(g) = \kappa g,$$
where $g$ is the Riemannian metric and $\kappa$ is a constant. A complete answer for surfaces was given by the uniformization theorem.
The literature offers several profound results for higher dimensional cases \cite{yau1978ricci,besse1987einstein}.  {Recent progress} often involves the Ricci flow, where the Einstein condition emerges as a fixed point of the normalized Ricci flow \cite{hami1982,perelman2002,perelman2003a,brendle2009differentiable,RFTA1}. Despite these insights, the general question of which smooth manifolds admit Einstein metrics remains one of the most difficult challenges in modern geometric analysis.

Notions of discrete curvature on graphs have been introduced to capture geometric features via transport and diffusion processes, most notably through Ollivier Ricci curvature and the Lin–Lu–Yau formulation \cite{Ollivier,LLY,MW}. These approaches interpret curvature as inducing local redistribution mechanisms on networks; see e.g. \cite{Paeng2012,BauerJostLiu2012,JostLiu2014,BCLMP2018,CKKLMP2020,KuzuyoshiTaiki2020,CKLLLY21,HLYZ2021,blhy,bai2021ricci,Muench2023,HLTK2023,HuaMuench2025} for recent developments. In this paper, we discuss existence and uniqueness results of discrete Einstein metrics on trees.

Let $G=(V,E,w)$ be a finite weighted graph with edge weight $w:E\to \mathbb{R}_+.$ We denote by $\kappa_{xy}$ the Ricci curvature in the sense of Lin-Lu-Yau, a discrete analogue of Ricci curvature, for any edge $\{x,y\}$ on $G;$ see Definition~\ref{def:lly}. A weighted graph is called \emph{discrete Einstein} (in the sense of Lin-Lu-Yau) if $\kappa_{xy}=\kappa,$ $\kappa\in \mathbb{R}$, for any edge $\{x,y\}.$ Notably, the Ricci curvature has a closed-form for trees; see \eqref{eq:lly-1}.

The main problems are the following.
\begin{question}\label{qu:1}
Does a graph admit a discrete Einstein metric? How about the uniqueness of discrete Einstein metrics?
\end{question}

In this paper, we study the problem for the class of trees.  We introduce a key {\it Ricci matrix} for a tree. 
 \begin{definition}%[  Ricci matrix from Ricci Flow on Trees]
\label{def:evolution_matrix}
For a finite tree $ T = (V, E),$ a matrix $R_T \in \mathbb{R}^{|E| \times |E|} $ indexed by the edges of $ T $ is defined as,  
for an edge $ e = \{x, y\} \in E $  and $ e' \in E $, 
\begin{align}\label{eq:matrix}
(R_T)_{e,e'} =
\begin{cases}
-\left( \dfrac{1}{d_x} + \dfrac{1}{d_y} \right), & \text{if } e = e', \\[8pt]
\dfrac{1}{d_x}, & \text{if } e \cap e' = \{x\}, \\[8pt]
0, & \text{otherwise},
\end{cases}
    \end{align}
where $ d_x $  denotes the degree of a vertex $ x .$

\end{definition}
\begin{remark}\label{rem:schr}
    The Ricci matrix can be regarded as a weighted Schr\"odinger operator on the line graph of $T,$ $R_T=\Delta-V$ where $\Delta$ is a weighted Laplacian on the line graph with weights defined in \eqref{eq:matrix} and $V$ is a diagonal matrix. Formally, \[
\Delta_{e,e'} =
\begin{cases}
\dfrac{1}{d_x} + \dfrac{1}{d_y} - 2, & \text{if } e = e' = \{x,y\}, \\[10pt]
\dfrac{1}{d_x}, & \text{if } e \cap e' = \{x\}, \\[8pt]
0, & \text{otherwise},
\end{cases}
\qquad
V_{e,e'} = 
\begin{cases}
\frac{2}{d_x} + \frac{2}{d_y} - 2, & e = e', \\
0, & e \neq e'.
\end{cases}
\]
 One could study the eigenvalue problems using the theory of Schr\"odinger operators.

\end{remark}

The discrete Ricci flow for the Lin-Lu-Yau curvature was introduced by \cite{bailin}, providing fundamental conditions for the existence and uniqueness of the flow's solution. Building upon this foundation, \cite{Baihua} extended the analysis to finite trees, investigating the long-time dynamics of edge weights and curvatures under the continuous-time evolution. Remarkably, the Ricci matrix $R_T$ appears in a simplified version of the
discrete Ricci flow, the linear ODE
\[
\frac{d}{dt} w(t) = R_T\, w(t),
\]
where $w(t)$ denotes the edge weights. Its solution is $w(t)=e^{tR_T}w(0)$,
and for any positive initial weight the normalized solution
$w(t)/\norm{w(t)}$ converges to the Perron eigenvector of $R_T$, which by
Theorem~\ref{thm:perron_structure} is precisely a discrete Einstein metric.

We give an affirmative answer to Question~\ref{qu:1} via the  Ricci matrix $R_T$ and the Perron–Frobenius theory; see e.g. \cite{BermanPlemmons1979,HM}. 

\begin{theorem}[Spectral characterization of constant Ricci curvature on trees]
\label{thm:perron_structure}
Let $T=(V,E)$ be a finite tree, and let $R_T \in \mathbb{R}^{|E|\times |E|}$ be its Ricci matrix.

\begin{enumerate}
    \item (\textbf{Perron eigenpair})  
    The largest eigenvalue $\lambda$ of $R_T$ is simple, and there exists a corresponding eigenvector $w>0$ which is unique up to scaling.

    \item (\textbf{Sign structure})  
    The Perron eigenvector is the only eigenvector that is entrywise positive. All other eigenvectors must change sign.

    \item (\textbf{Einstein curvature characterization})  
     A positive edge weight $w$ is discrete Einstein of curvature $\kappa$
      if and only if
    \[
    R_T w = \lambda_{\max} w, \quad \kappa = -\lambda_{\max}.
    \]

    \item (\textbf{Uniqueness of Einstein curvature metric})  
    Up to scaling, the Perron eigenvector is the only positive weight function on $T$ that induces constant Ricci curvature.
\end{enumerate}
\end{theorem}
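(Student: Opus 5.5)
The plan is to reduce everything to the Perron--Frobenius theorem for irreducible nonnegative matrices, applied to a shift of $R_T$, and to link it to curvature through the closed form \eqref{eq:lly-1}. I will assume \eqref{eq:lly-1} takes the following shape; this is the step I am least sure of. For an edge $e=\{x,y\}$ with positive weights $w$,
\[
\kappa_{xy}\, w_{xy} = \Big(\frac1{d_x}+\frac1{d_y}\Big) w_{xy} - \frac1{d_x}\sum_{z\sim x,\, z\neq y} w_{xz} - \frac1{d_y}\sum_{z\sim y,\, z\neq x} w_{yz}.
\]
Comparing with \eqref{eq:matrix}, this reads $\kappa_e w_e = -(R_T w)_e$ for every $e\in E$. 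The first step is to verify this identity carefully from Definition~\ref{def:lly} as specialized in \eqref{eq:lly-1}. If the paper's normalization differs, for instance by a factor depending on $w_e$, the argument below goes through with that factor tracked.

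\textbf{Parts (1) and (2).} Choose $c>\max_e\big(\tfrac1{d_x}+\tfrac1{d_y}\big)$ and set $A=R_T+cI$, which is entrywise nonnegative with positive diagonal.
\begin{itemize}
\item \emph{Irreducibility.} The off-diagonal pattern of $A$ is exactly the adjacency of the line graph $L(T)$, since $A_{e,e'}=1/d_x>0$ whenever $e\cap e'=\{x\}$. Because $T$ is connected, $L(T)$ is connected, so $A$ is irreducible. When $|E|=1$ the statement is trivial, with $R_T=(-2)$.
\item \emph{Perron eigenpair.} Perron--Frobenius gives a simple eigenvalue $\rho(A)$ that is real and strictly dominates the real parts of all other eigenvalues. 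It has a right eigenvector $w>0$ and a left eigenvector $u>0$, each unique up to scaling. Hence $\lambda_{\max}=\rho(A)-c$ is a simple largest eigenvalue of $R_T$ with positive eigenvector $w$.
\item \emph{Sign structure.} $R_T$ is not symmetric, so orthogonality of eigenvectors is not available; instead use the left vector $u$. Suppose $R_T v=\mu v$ with $v\ge 0$, $v\neq 0$. Then
\[
\lambda_{\max}\, u^{\top}v = u^{\top}R_T v = \mu\, u^{\top}v,
\]
and $u^{\top}v>0$, so $\mu=\lambda_{\max}$. Simplicity then forces $v$ to be a multiple of $w$. Therefore every other eigenvector has entries of both signs.
\end{itemize}

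\textbf{Parts (3) and (4).} By the identity above, a positive $w$ has constant curvature $\kappa$ if and only if $R_T w=-\kappa w$, that is, $w$ is a positive eigenvector with eigenvalue $-\kappa$.
\begin{itemize}
\item By (2), such an eigenvalue must equal $\lambda_{\max}$, so $\kappa=-\lambda_{\max}$.
\item Conversely, the Perron vector satisfies $R_T w=\lambda_{\max}w$, so it has constant curvature $-\lambda_{\max}$.
\item Uniqueness up to scaling then follows from the simplicity in (1).
\end{itemize}

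The main obstacle is the first step: pinning down the exact curvature formula for weighted trees from the definition, including the correct handling of leaves and degree-one endpoints. The spectral part is standard once irreducibility of $R_T+cI$ on the line graph has been checked.
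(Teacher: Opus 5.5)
Your proposal is correct and follows essentially the paper's route: shift $R_T$ by a multiple of $I$ to get a nonnegative matrix that is irreducible because the line graph is connected, apply Perron--Frobenius, and note that multiplying \eqref{eq:lly-1} by $w_e$ gives exactly your identity $\kappa_e w_e=-(R_Tw)_e$, so the curvature formula you assumed is the right one. One correction: $R_T$ is in fact symmetric, since $(R_T)_{e,e'}$ and $(R_T)_{e',e}$ both equal $1/d_x$ for the shared vertex $x$; your argument for (2) is still valid, your left Perron vector $u$ is simply $w$, and the argument reduces to the standard orthogonality argument that the paper leaves implicit.
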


We also establish quantitative bounds for the Perron eigenvalue $\lambda_{\max}(R_T)$. In Theorem~\ref{thm:lower bound}, we prove that for any non-star tree $T$,
\[
\min_{\mathrm{internal\ edge \ }e}(-\widetilde{V}_{e,e})\le \lambda_{\max}(R_T)\le  \max_{e\in E}(-V_{e,e}),
\]
where $V$ is the diagonal potential matrix from the Schr\"odinger operator representation. For balls $T_{d,L}$ in $d$-regular trees of depth $L$, we obtain sharp asymptotic estimates (Propositions~\ref{prop:td1} and \ref{prop:td2}):
\[
\lim_{L \to \infty} \lambda_{\max}(R_{T_{d, L}}) = 1 - \frac{4}{d} + \frac{2\sqrt{d-1}}{d}.
\]

Moreover, we obtain a universal upper bound for $\lambda_{\max}(R_T)$ that depends only on 
the maximum degree of the tree.

\begin{theorem}\label{thm:degree-bound}
Let $T$ be a finite tree with maximum degree $\mathcal D = \max_{v\in V} d_v$. Then
\[
\lambda_{\max}(R_T)\le
\begin{cases}
-2, & \mathcal D=1,\\[4pt]
1-\dfrac{4}{\mathcal D}+\dfrac{2\sqrt{\mathcal D-1}}{\mathcal D}, & 2\le \mathcal D\le 18,\\[8pt]
\dfrac{15+6\sqrt2}{19}, & \mathcal D\ge 19.
\end{cases}
\]
\end{theorem}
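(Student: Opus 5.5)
The plan is to compare $\lambda_{\max}(R_T)$ with a single function of the degree. Set
\[
f(d)=1-\frac4d+\frac{2\sqrt{d-1}}{d},\qquad d\ge 2 .
\]
The three cases of Theorem~\ref{thm:degree-bound} are exactly the statement $\lambda_{\max}(R_T)\le \max_{2\le d\le \mathcal D} f(d)$, plus the trivial case $\mathcal D=1$. To see this, one computes $f'(d)=d^{-2}\bigl(4-\frac{d-2}{\sqrt{d-1}}\bigr)$. It vanishes when $\sqrt{d-1}=2+\sqrt5$, i.e.\ at $d\approx 18.9$. So $f$ increases on $[2,18]$ and decreases on $[19,\infty)$. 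Since $f(19)=\frac{15+6\sqrt2}{19}>f(18)$, the maximum over integers $d\le\mathcal D$ is $f(\mathcal D)$ for $\mathcal D\le 18$ and $f(19)$ for $\mathcal D\ge 19$. When $\mathcal D=1$ the tree is a single edge, $R_T=(-2)$, and there is nothing to prove. This reduction is a short calculus check, and I would do it first.

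For the main inequality I would use the Collatz--Wielandt bound. $R_T$ is an irreducible Metzler matrix, because the line graph of a tree is connected. Hence for any entrywise positive $u$,
\[
\lambda_{\max}(R_T)\le \max_{e}\frac{(R_Tu)_e}{u_e}.
\]
For $e=\{x,y\}$ this ratio splits into two vertex contributions:
\[
\frac{(R_Tu)_e}{u_e}=\frac1{d_x}\Bigl(\sum_{e'\ni x,\ e'\ne e}\frac{u_{e'}}{u_e}-1\Bigr)+\frac1{d_y}\Bigl(\sum_{e'\ni y,\ e'\ne e}\frac{u_{e'}}{u_e}-1\Bigr).
\]
The test vector should mimic the Perron vector of the $d$-regular tree, which decays by a factor $1/\sqrt{d-1}$ per level. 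Concretely, I would root $T$ at a central vertex. At each vertex $v$ of degree $d_v\ge2$, the child edges receive $u$-values equal to $c_v$ times the value on the parent edge, with $c_v\approx 1/\sqrt{d_v-1}$. Take an edge $e$ whose upper endpoint is $x$ and lower endpoint is $y$. Its lower vertex contributes $\frac{1}{d_y}\bigl((d_y-1)c_y-1\bigr)$. Its upper vertex contributes $\frac1{d_x}\bigl(c_x^{-1}+(d_x-2)-1\bigr)$; here the parent edge at $x$ contributes $c_x^{-1}$ and the $d_x-2$ siblings contribute $1$ each (at the root there is no parent term). The goal is to show that each edge's total is at most $f(d_x)/2+f(d_y)/2\le\max_d f(d)$.

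The hard step is balancing these two contributions. With the pure choice $c_v=1/\sqrt{d_v-1}$, the parent-side term is $(\sqrt{d_v-1}-1)/d_v$, whereas the child-side term at $v$ has an extra $(d_v-2)/d_v$ from the siblings. So the two halves do not each equal $f(d_v)/2$. My first fix would be a vertex-dependent symmetrization: replace $u$ by $u_e=\phi(x)\phi(y)\,\psi(e)$, where $\phi$ depends only on degrees. This shifts mass between the two sides of each vertex so that both contributions at $v$ are bounded by the same quantity $f(d_v)/2$.

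Failing that, I would use the factorization $R_T=B^\top D^{-1}B-2\,\mathrm{diag}\bigl(\tfrac1{d_x}+\tfrac1{d_y}\bigr)$, where $B$ is the vertex--edge incidence matrix. The nonzero spectrum of $B^\top D^{-1}B$ coincides with that of $I+D^{-1/2}AD^{-1/2}$. The idea is to transfer a vertex-weighted test function on $T$ (an eigenfunction of the normalized adjacency of a $d$-regular tree truncated to $T$) into an edge test vector. In either route, the final step is to check that the resulting per-edge bound, a function of $(d_x,d_y)$ only, never exceeds $\max_{2\le d\le\mathcal D}f(d)$. I expect this to be a two-variable elementary inequality, with the cross term handled by AM--GM.
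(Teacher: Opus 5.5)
\textbf{There is a genuine gap: the proposal stops at the decisive step, the choice of test vector.}

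Your reduction to $\lambda_{\max}(R_T)\le\max_{2\le d\le\mathcal D}f(d)$ is correct, and so is your Collatz--Wielandt setup. Your framework is in fact dual to the paper's. A rooted test vector with child/parent ratio $c_v$ gives, on an edge with upper endpoint $x$ and lower endpoint $y$, the ratio $\frac{c_x^{-1}+d_x-3}{d_x}+\frac{(d_y-1)c_y-1}{d_y}$. This is exactly the paper's coefficient $A(d_x,\alpha_x)+B(d_y,\alpha_y)$, which comes from the weighted Cauchy--Schwarz inequality in the vertex decomposition $\langle f,R_Tf\rangle=\sum_v(S_v^2-2A_v)/d_v$ with $\alpha_v=(d_v-1)c_v$. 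But the choice of $c_v$ carries the whole theorem, and neither route you suggest supplies it.

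\begin{enumerate}
\item The regular-tree choice $c_v=1/\sqrt{d_v-1}$ already fails for $\mathcal D=8$. An edge with $d_x=8$, $d_y=7$ gets ratio $\frac{\sqrt7+5}{8}+\frac{\sqrt6-1}{7}\approx1.1628$, which exceeds $f(8)\approx1.1614$.
\item Your first fix aims at a target no positive vector can meet. The contribution of $v$ to an incident edge $e$ is $\frac1{d_v}\bigl(U_v/u_e-2\bigr)$, where $U_v=\sum_{e'\ni v}u_{e'}$. Its average over the $d_v$ incident edges is at least $\frac{d_v-2}{d_v}$ by Cauchy--Schwarz. Since $\frac{d_v-2}{d_v}-\frac{f(d_v)}2=\frac{(\sqrt{d_v-1}-1)^2}{2d_v}>0$ for $d_v\ge3$, the split at a vertex is necessarily asymmetric. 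The ansatz $\phi(x)\phi(y)\psi(e)$ is just another positive vector, so it does not escape this.
\item The incidence-matrix route is not yet an argument. The correction $-2\,\mathrm{diag}(1/d_x+1/d_y)$ is not scalar, so the spectral identity with $I+D^{-1/2}AD^{-1/2}$ does not transfer. A transplanted test function only gives Rayleigh lower bounds unless you use it as a supersolution, which brings you back to the same choice problem.
\end{enumerate}

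The missing idea is to make the lower-endpoint contribution a global constant. Take $c_v=\frac{Kd_v+1}{d_v-1}$, which is the paper's $\alpha_v=Kd_v+1$. Then $\frac{(d_y-1)c_y-1}{d_y}=K$ for every $d_y\ge2$, and the contribution is $-1<K$ at leaves. Consequently:
\begin{itemize}
\item every edge whose upper endpoint $x$ is not the root has ratio at most $C(d_x,K)=\frac{(1+\frac1{Kd_x+1})(d_x-1)-2}{d_x}+K$, a function of the upper degree alone;
\item edges at a root of degree $\ge3$ have ratio at most $\frac{d_r-2}{d_r}+K<C(d_r,K)$, provided $0<K<\frac13$.
\end{itemize}
For $\mathcal D\ge3$, choose $K=K_m=\frac{\sqrt{m-1}-1}{m}$ with $m=\min\{\mathcal D,19\}$. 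Then $K_mm+1=\sqrt{m-1}$, so $c_m=1/\sqrt{m-1}$ and $C(m,K_m)=f(m)$.

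What remains is the one-variable fact that $C(d,K_m)\le C(m,K_m)$ for all integers $d\le\mathcal D$. The paper proves this from the sign of the numerator of $\partial_dC$, the concave quadratic $K(3K-1)d^2+8Kd+4$. For $m\le18$ it equals $4$ at $d=0$ and $t(-t^2+4t+1)>0$ at $d=m$, where $t=\sqrt{m-1}$, so it is positive on $[1,m]$. For $m=19$ the paper compares $d=18$ and $d=19$ directly. Without this choice of $c_v$ and this verification, your plan does not yet prove the bound.
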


This bound is at $\mathcal D=1$ and 
attained asymptotically by regular trees when $2\le\mathcal D\le 18$; for $\mathcal D\ge 19$, 
the constant $\frac{15+6\sqrt2}{19}$ is the maximum of $F(x)=1-4/x+2\sqrt{x-1}/x$ 
over integers $x\ge 2$, attained at $x=19$.

\medskip
Next, we study how $\lambda_{\max}(R_T)$ changes under attachment.
Proposition~\ref{pro:attachment} gives a single threshold $\Lambda(d,k)$: attaching a
forest of $k$ trees at a vertex of degree $d$ cannot decrease $\lambda_{\max}$ as long as
$\lambda_{\max}(R_T)\le\Lambda(d,k)$. In particular, $\lambda_{\max}$ never decreases at a
vertex of degree $\le 2$, and strictly increases at \emph{any} vertex when
$\lambda_{\max}(R_T)\le 0$. Combined with the critical subtree $S_3^2$
(Definition~\ref{def:S32}), this monotonicity shows that every tree admitting a
positive-curvature Einstein metric is a {\it caterpillar}, a tree whose non-leaf edges
form a single path (Definition~\ref{def:caterpillar}).

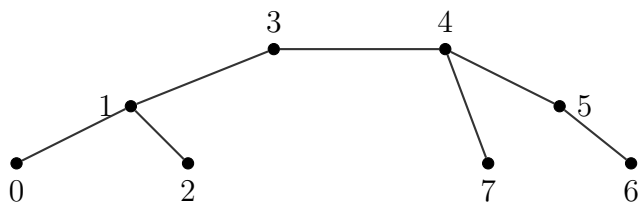
\begin{figure}[H]
\centering
\begin{tikzpicture}[
    scale=1.9, % 整体微缩以适应定理下方的空间
    nodestyle/.style={circle, draw, fill=black, inner sep=1.5pt},
    labelstyle/.style={font=\scriptsize}
]

    % --- 节点坐标布局 (扁平化处理) ---
    % 中心脊柱部分
    \node[nodestyle, label=above:{  3}] (n3) at (0,0) {};
    \node[nodestyle, label=above:{  4}] (n4) at (1.2,0) {};
    
    % 左侧分叉 (节点1连接0, 2, 3)
    \node[nodestyle, label=left:{  1}] (n1) at (-1.0, -0.4) {};
    \node[nodestyle, label=below:{  0}] (n0) at (-1.8, -0.8) {};
    \node[nodestyle, label=below:{  2}] (n2) at (-0.6, -0.8) {};
    
    % 右侧分叉 (节点4连接5, 7; 节点5连接6)
    \node[nodestyle, label=right:{  5}] (n5) at (2.0, -0.4) {};
    \node[nodestyle, label=below:{  6}] (n6) at (2.5, -0.8) {};
    \node[nodestyle, label=below:{  7}] (n7) at (1.5, -0.8) {};

    % --- 统一线宽的边绘制 ---
    \begin{scope}[thick, black!80]
        \draw (n3) -- (n4);
        \draw (n1) -- (n3);
        \draw (n4) -- (n5);
        \draw (n4) -- (n7);
        \draw (n1) -- (n0);
        \draw (n1) -- (n2);
        \draw (n5) -- (n6);
    \end{scope}

\end{tikzpicture}
\caption{The topology of a caterpillar tree.}
\label{fig:tree_topology_uniform}
\end{figure}

\begin{theorem}
\label{the:lambda-negative-caterpillar}
If a tree $T$ admits a discrete Einstein metric with positive curvature, then $T$ is a caterpillar tree.
\end{theorem}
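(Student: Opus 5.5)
By Theorem~\ref{thm:perron_structure}, $T$ admits a positive-curvature Einstein metric exactly when $\lambda_{\max}(R_T)<0$, since $\kappa=-\lambda_{\max}$. So I will prove the contrapositive: if $T$ is not a caterpillar, then $\lambda_{\max}(R_T)\ge 0$.

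\textbf{Step 1 (structural reduction).} A tree is a caterpillar iff deleting all leaves leaves a path. Equivalently, it contains no vertex with three neighbours that are each non-leaves. Hence a non-caterpillar contains a copy of the critical subtree $S_3^2$ of Definition~\ref{def:S32}. I take this to be the spider with three legs of length two: a center $c$, neighbours $a_1,a_2,a_3$, and leaves $b_i$ attached to $a_i$. Moreover $T$ can be rebuilt from this copy by repeatedly attaching forests of subtrees at vertices. I will order the attachments so that each step attaches, at a single vertex, everything hanging there. This makes the setting exactly that of Proposition~\ref{pro:attachment}.

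\textbf{Step 2 (base case).} I compute $\lambda_{\max}(R_{S_3^2})$ directly. The edges are the three inner edges $ca_i$ (degrees $3,2$) and the three outer edges $a_ib_i$ (degrees $2,1$). By the symmetry of the Perron vector, take weights $u$ on inner edges and $v$ on outer edges. The eigen-equations are
\[
-\tfrac56 u+\tfrac23 u+\tfrac12 v=\lambda u,\qquad \tfrac12 u-\tfrac32 v=\lambda v,
\]
where the coefficient $\tfrac23$ comes from the two other inner edges at $c$, each with weight $\tfrac13$. This gives the $2\times2$ matrix $\begin{pmatrix}-1/6&1/2\\1/2&-3/2\end{pmatrix}$, whose determinant is $\tfrac14-\tfrac14=0$. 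Its largest eigenvalue is therefore exactly $0$. So $S_3^2$ is precisely critical, with $\lambda_{\max}=0$, which presumably explains its name.

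\textbf{Step 3 (monotonicity).} Proposition~\ref{pro:attachment} says that attaching a forest at any vertex strictly increases $\lambda_{\max}$ when the current value is $\le 0$. It also says $\lambda_{\max}$ never decreases when attaching at a vertex of degree $\le 2$. Starting from $\lambda_{\max}(R_{S_3^2})=0$, induction over the attachment steps of Step~1 keeps $\lambda_{\max}\ge 0$. The induction hypothesis is exactly what licenses the next application of the proposition, since either $\lambda_{\max}\le 0$ (and it strictly increases) or it is already positive.

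\textbf{Main obstacle.} The difficulty is in that last case. Once $\lambda_{\max}>0$, the "strictly increases when $\lambda_{\max}\le 0$" clause no longer applies. I would then need the general threshold condition $\lambda_{\max}\le\Lambda(d,k)$, which might fail at high-degree vertices; a priori $\lambda_{\max}$ could drop back below $0$.

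The first fix I would try is to avoid the issue by ordering the growth. Every vertex of $S_3^2$ other than the center has degree $\le 2$, so attaching there never decreases $\lambda_{\max}$. The problem is attachment at the center $c$, and at vertices whose degree exceeds $2$ after earlier steps. For those, I would attach in an order that stays in the regime $\lambda_{\max}\le 0$ or degree $\le 2$ where possible.

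If that does not suffice, I would use a direct test-vector argument instead of monotonicity. Take the Perron vector of $S_3^2$, suitably rescaled on edges incident to vertices whose degree grew in $T$, and extend it by zero or small values. Then show $\langle R_T w,w\rangle\ge 0$ for $R_T$ symmetrized by the diagonal similarity $D^{1/2}$ (degrees on the line graph). By the Rayleigh quotient this gives $\lambda_{\max}(R_T)\ge 0$. Verifying this inequality at the enlarged-degree vertices is where I expect the real work to lie.
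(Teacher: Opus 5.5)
Your Steps 1 and 2 are correct and match the paper. A non-caterpillar contains $S_3^2$. Your reduced $2\times 2$ matrix, with determinant $0$ and trace $-5/3$, correctly gives $\lambda_{\max}(R_{S_3^2})=0$ with positive eigenvector $u=3v$.

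\textbf{Gap in Step 3.} As written, Step 3 is not a proof. The inductive clause ``either $\lambda_{\max}\le 0$ \ldots\ or it is already positive'' does not license the next step. Once $\lambda_{\max}>0$, Proposition~\ref{pro:attachment} only protects attachments at vertices of degree $\le 2$, or those below the threshold $\Lambda(d,k)$. You leave this open and fall back on an unspecified test-vector argument. Incidentally, $R_T$ is already symmetric, so no $D^{1/2}$-symmetrization is needed.

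\textbf{Missing idea: the order of attachments.} Your own Step 1 setup already supplies it. Since $S_3^2$ is a connected subtree of $T$, every component of $T-V(S_3^2)$ hangs off exactly one of its seven vertices by a single edge. So $T$ is obtained by one forest attachment per vertex of $S_3^2$. No attachment is ever made inside an attached forest, and attaching at one vertex does not change the degree of any other vertex of $S_3^2$.
\begin{itemize}
\item Do the center $c$ \emph{first}. At that moment $\lambda_{\max}=0$, so Proposition~\ref{pro:attachment}(b), or (c) since $d_c=3$, applies regardless of degree and yields $\lambda_{\max}\ge 0$.
\item The remaining six steps are at $a_i$ and $b_i$. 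These still have degree $2$ and $1$ when their turn comes, so part (a) gives non-decrease whatever the sign of the current eigenvalue.
\end{itemize}
Hence $\lambda_{\max}(R_T)\ge 0$, so any Einstein metric has $\kappa=-\lambda_{\max}\le 0$. This is exactly the paper's argument.

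With this ordering your worry about ``vertices whose degree exceeds $2$ after earlier steps'' never arises: $c$ is the only vertex of degree $\ge 3$ ever used as an attachment point, and it is used while $\lambda_{\max}=0$. The test-vector fallback is therefore unnecessary.
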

\begin{remark}

The converse is false: there exist caterpillar trees with negative-curvature Einstein metrics; see Examples~\ref{ex:double-star}.
\end{remark}

Moreover, we prove the strict monotonicity of the Einstein metric, the Perron vector, in the positive-curvature case, which is decreasing from the edge with maximal weight along radial direction towards the leaves; see e.g. Figure~\ref{fig:perron_compact}.
\begin{theorem}\label{coro:uniquemax}
    Let $T$ be a nonstar-tree with a positive-curvature Einstein metric $w$.
Then there are at most two edges attaining maximal weight $w.$ 
Moreover, if two such edges exist, they must share a common vertex of degree $2.$
Let $e_1=\mathrm{max}_{e\in E} w_e.$ For any leaf edge $f$ and any simple path $e_1\sim e_2\sim \cdots \sim e_k\sim \cdots\sim e_N=f,$ then $w_{e_1}\geq w_{e_2},$ and $w_{e_i}>w_{e_{i+1}}$ for $2\leq i\leq N-1.$
\end{theorem}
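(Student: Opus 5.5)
The plan is to read everything off the Perron eigen-equation of Theorem~\ref{thm:perron_structure}, rewritten in Laplacian form. By Theorem~\ref{thm:perron_structure}, $R_T w=\lambda w$ with $\lambda=\lambda_{\max}=-\kappa<0$. For an edge $e=\{x,y\}$ write $S_x=\sum_{e'\ni x}w_{e'}$. Expanding \eqref{eq:matrix} gives $\lambda w_e=(S_x-2w_e)/d_x+(S_y-2w_e)/d_y$, which I will rewrite as
\[
\sum_{e'\cap e=\{x\}}\frac{w_{e'}-w_e}{d_x}+\sum_{e'\cap e=\{y\}}\frac{w_{e'}-w_e}{d_y}=\bigl(\lambda-c_e\bigr)w_e,\qquad c_e=2-\frac{2}{d_x}-\frac{2}{d_y}.
\]
This is the form $\Delta w-Vw=\lambda w$ of Remark~\ref{rem:schr}.

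\textbf{Internal edges.} For a non-leaf edge ($d_x,d_y\ge2$) we have $c_e\ge0$, so the right-hand side is strictly negative. Hence $w_e$ is strictly larger than a positively weighted combination of its neighbours, which gives a strict maximum-type principle: no internal edge can satisfy $w_e\le w_{e'}$ for all neighbours $e'$. By Theorem~\ref{the:lambda-negative-caterpillar}, $T$ is a caterpillar, so the internal edges form a path $P$ (the spine), and every other edge is a leaf edge hanging off a spine vertex. Along $P$, the principle forbids any interior local minimum. Therefore, starting from a spine edge of maximal weight, the weights must be nonincreasing toward both ends of $P$, and in fact strictly decreasing after the first step. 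The reason is that a plateau $w_{e_i}=w_{e_{i+1}}$ with $i\ge2$ would make $e_{i+1}$ a non-strict local minimum once the next step does not increase. The case $i=1$ needs separate care, and this is where ties at the top arise.

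\textbf{Leaf edges.} Two leaf edges $f,f'$ at the same spine vertex $y$ satisfy the identical scalar equation $(\lambda+1+2/d_y)w_f=S_y/d_y$, so all leaf edges at $y$ carry a common weight $\ell_y$. Let $e$ be the spine edge at $y$ pointing toward the maximum and $g$ the spine edge pointing away (possibly absent), and let $m_y$ be the number of leaf edges at $y$. The relation becomes
\[
\bigl(d_y\lambda+d_y+2-m_y\bigr)\ell_y=w_e+w_g .
\]
I then want to show $\ell_y<w_e$, and $\ell_y\le w_e$ with equality only in the degenerate configuration. This has to be combined with the equation at $e$, eliminating $S_y$ between the two relations. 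I would proceed by induction from the far end of the spine toward the maximal edge. At the terminal spine vertex ($g$ absent, $m_y=d_y-1$) the relation reads $(d_y\lambda+3)\ell_y=w_e$. The needed inequality is then $\lambda>-2/d_y$, which I would extract from the equation at $e$ together with positivity of $w$ and the monotonicity already obtained on the spine. A leaf edge could also itself be the maximal edge when the spine is short. That case is handled by the same relation: $\ell_y\ge w_e$ together with these equations would force $d_y=2$ and equality.

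\textbf{Maximal edges.} If two edges attain the maximum, the strict principle above shows they cannot both be separated by a third edge. They must therefore be adjacent at a vertex $v$, and substituting into the two equations with equal weights forces the contributions from the other edges at $v$ to vanish, i.e.\ $d_v=2$. A third maximal edge would create an interior non-strict minimum, or a vertex of degree $\ge3$ carrying ties, and both are excluded. With these ingredients, a simple path $e_1\sim\cdots\sim e_N=f$ runs down the spine and possibly ends in one leaf edge, and the claimed inequalities follow.

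The step I expect to be the main obstacle is the leaf-edge comparison, specifically obtaining a usable lower bound such as $\lambda>-2/d_y$ locally from the eigen-equations. I would first try eliminating $S_y$ via the equation of the adjacent spine edge. If that fails, I would invoke the lower bound of Theorem~\ref{thm:lower bound} on $\lambda_{\max}$.
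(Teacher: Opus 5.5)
Your proof breaks at the step on tied maxima, and that step cannot be repaired because the degree-$2$ clause of the statement is false. Take $e=\{v,x\}$ and $e'=\{v,x'\}$ with $w_e=w_{e'}=M$. Subtracting their eigen-equations cancels the common term $(S_v-2M)/d_v$ and leaves only $(S_x-2M)/d_x=(S_{x'}-2M)/d_{x'}$. Meanwhile $\lambda<0$ only forces $S_x<2M$ and $S_{x'}<2M$. Nothing makes the other edges at $v$ vanish, and $S_v>2M$ is fully consistent with both equations.

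A concrete counterexample is $T_{2,1}$ from Example~\ref{ex:Tmk}: center $c$, branches $c\sim a_i\sim a_i'$ for $i=1,2$, and one extra leaf $d_1$ at $c$. It is a non-star caterpillar.
\begin{itemize}
\item The Perron vector is invariant under swapping the two branches, so $w_{ca_1}=w_{ca_2}=p$.
\item Write $q=w_{a_ia_i'}$ and $r=w_{cd_1}$. The eigen-equations give $q=p/(2\lambda+3)$, $r=2p/(3\lambda+4)$ and $18\lambda^3+60\lambda^2+53\lambda+6=0$.
\item This cubic has positive coefficients, so $\lambda<0$; numerically $\lambda\approx-0.132$.
\item By Corollary~\ref{cor:leaf-local}(2), $q<p$ and $r<p$. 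So exactly $ca_1$ and $ca_2$ are maximal, and they meet at $c$ with $d_c=3$.
\end{itemize}
The trees $T_{2,2}$ and $T_{2,3}$, which the paper lists with $\lambda<0$, behave the same way. The paper's own proof has the same hole: it treats only $S_y<2w_{e_{\max}}$ and $S_y=2w_{e_{\max}}$, whereas here $S_c=2p+r>2p$.

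What is actually true is weaker. There are at most two maximal edges, and if there are two, they share a vertex all of whose other incident edges are leaf edges. The decay statement also holds, but only for paths in $T$. For paths in the line graph it fails: $ca_1\sim cd_1\sim ca_2\sim a_2a_2'$ violates $w_{e_2}>w_{e_3}$.

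The surviving parts also need two repairs.

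First, your Laplacian-form principle (no internal edge lies weakly below all its neighbours) does not exclude a local minimum along the spine. The leaf edges at spine vertices are strictly lighter and enter the average with negative sign. Use the raw equation instead. If some other edge at $x$ has weight $\ge w_e$, then $S_x-2w_e\ge 0$, since leaf edges only increase $S_x$. Because $\lambda w_e<0$, this cannot happen at both endpoints of $e$.

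Apply this to a path $e_1,e_2,\dots$ in $T$ starting at a maximal edge. Taking the first index $i\ge 2$ with $w_{e_{i+1}}\ge w_{e_i}$ gives a contradiction at $e_i$, so $w_{e_i}>w_{e_{i+1}}$ for all $i\ge 2$ (compare Theorem~\ref{theo:directional-decreasing}). It follows that two maximal edges must be adjacent. Three maximal edges would have to share a vertex and all be internal, which produces an $S_3^2$ and contradicts Theorem~\ref{the:lambda-negative-caterpillar}.

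Second, the leaf comparison you single out as the main obstacle is exactly Corollary~\ref{cor:leaf-local}(2). That corollary holds for every $\lambda$, so no bound of the form $\lambda>-2/d_y$ is needed.
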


\begin{figure}[H] % 使用 [H] 强制紧跟文字
\centering
\begin{tikzpicture}[
    scale=2, % 略微缩小整体比例
    nodestyle/.style={circle, draw, fill=black, inner sep=1.5pt},
    weightstyle/.style={font=\scriptsize, color=blue!80!black}
]

    % --- 核心节点 (压缩水平间距至 1.2) ---
    \node[nodestyle, label=above:{\small 3}] (n3) at (0,0) {};
    \node[nodestyle, label=above:{\small 4}] (n4) at (1.2,0) {};
    
    % --- 左侧分支 (压缩水平间距) ---
    \node[nodestyle, label=left:{\small 1}] (n1) at (-0.8, -0.6) {};
    \node[nodestyle, label=below:{\small 0}] (n0) at (-1.4, -1.2) {};
    \node[nodestyle, label=below:{\small 2}] (n2) at (-0.2, -1.2) {};
    
    % --- 右侧分支 (压缩水平间距) ---
    \node[nodestyle, label=right:{\small 5}] (n5) at (2.0, -0.6) {};
    \node[nodestyle, label=below:{\small 6}] (n6) at (2.6, -1.2) {};
    \node[nodestyle, label=below:{\small 7}] (n7) at (1.4, -1.2) {};

    % --- 绘制边并标注权重 ---
    % 核心边
    \draw[line width=2.5pt, black!80] (n3) -- node[above, weightstyle] {1.00} (n4);
    
    % 次级边
    \draw[line width=1.8pt, black!70] (n1) -- node[above left, pos=0.4, weightstyle, xshift=2pt] {0.85} (n3);
    \draw[line width=1.6pt, black!70] (n4) -- node[above right, pos=0.4, weightstyle, xshift=-2pt] {0.74} (n5);
    
    % 末端边
    \draw[line width=1.0pt, black!60] (n4) -- node[right, pos=0.6, weightstyle] {0.44} (n7);
    \draw[line width=0.6pt, black!50] (n1) -- node[left, pos=0.7, weightstyle] {0.29} (n0);
    \draw[line width=0.6pt, black!50] (n1) -- node[right, pos=0.7, weightstyle] {0.29} (n2);
    \draw[line width=0.5pt, black!50] (n5) -- node[right, pos=0.7, weightstyle] {0.25} (n6);

\end{tikzpicture}
\caption{The Einstein metric on a caterpillar tree with $\kappa \approx 0.0168$.}
\label{fig:perron_compact}
\end{figure}
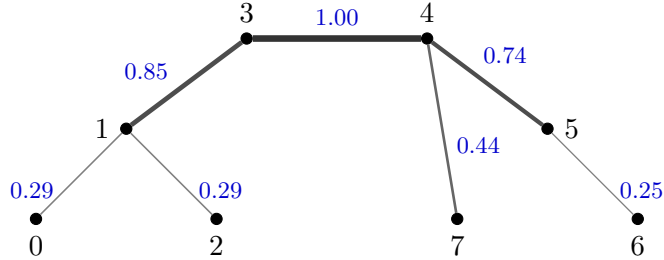

\begin{remark}
It is a well-established consequence of Perron--Frobenius theory for acyclic matrices that the Perron vector of a tree's adjacency matrix attains a unique maximum and decreases strictly along any simple path emanating from that maximum \cite{Fiedler1975Acyclic}. 
\end{remark}
We further analyze the local behavior of the Perron vector. Corollary~\ref{cor:leaf-local} shows that at any vertex, incident leaf edges share equal weight and are strictly lighter than incident internal edges. For $\lambda \le 0$, the global minimum always occurs at a leaf edge (Corollary~\ref{cor:leaf-minimality} and Proposition~\ref{pro:leaf-edge-min}), while for $\lambda > 0$, the global maximum is confined to internal edges (Proposition~\ref{pro:schrodingemaximalweight}). Interestingly, the minimum for $\lambda > 0$ can lie on an internal edge, as shown by the counterexample $D^{29}_{4,4}$. 

Section~\ref{sec:examples} illustrates the spectral phase transition through explicit families. Example~\ref{ex:cospectral-pair} gives two non-isomorphic trees on $17$ vertices that are cospectral with respect to $R_T$, showing that the full spectrum does not uniquely determine the tree structure. Examples~\ref{ex:double-star}, \ref{ex:Tmk} exhibit parameter regimes where $\lambda_{\max}$ transitions from negative to zero to positive, revealing the interplay between path-like propagation and branching.

Finally, the Appendix \ref{sec:appendix} provides counterexamples showing that $\lambda_{\max}(R_T)$ is not monotone under edge subdivision or leaf attachment at high-degree vertices, highlighting the subtle dependence of the eigenvalue on global tree topology.
\medskip

\textbf{Organization.} 
Section~\ref{notation} introduces basic notations and proves the spectral characterization 
(Theorem~\ref{thm:perron_structure}). 
Section~\ref{bounds} establishes quantitative bounds for $\lambda_{\max}(R_T)$. 
Section~\ref{sec:eigenvalueinc} studies eigenvalue monotonicity under leaf attachment and 
proves the caterpillar classification. 
Section~\ref{perro<0} analyzes the positive curvature regime and proves radial monotonicity of edge weights. 
Section~\ref{sec:leafminmax} studies the local leaf weights and locates the extremal edges.
Appedix~\ref{sec:appendix} presents examples and counterexamples illustrating the spectral 
phase transition. 
Appedix~\ref{sec:openproblems} concludes with open problems.

\section{Notation and setup} \label{notation}
Throughout this paper, we consider only finite, simple, undirected trees.
A \emph{tree} $T = (V, E)$ is a connected, acyclic graph, where $V$ denotes the set of vertices and $E \subseteq \binom{V}{2}$ denotes the set of edges. For a tree with $n$ vertices, we have $|E| = n-1$.
For a vertex $v \in V$, its \emph{degree} $d_v$ or $d_v(T)$ is the number of edges incident to $v$. A vertex of degree $1$ is called a \emph{leaf} (or pendant vertex). An edge incident to a leaf is called a \emph{leaf edge}. Vertices with degree $\ge 2$ are called \emph{internal vertices}, and edges whose both endpoints are internal are called \emph{internal edges}.

\begin{definition}[Star tree]
A \emph{star} is a tree with exactly one internal vertex (the \emph{center}) and all other vertices being leaves. Equivalently, a star is a complete bipartite graph $K_{1,m}$ for some $m \ge 1$.
\end{definition}

\begin{definition}[Caterpillar Trees]
\label{def:caterpillar}
A \emph{caterpillar tree} is a tree $T=(V,E)$ that contains a path 
$P = (v_1,v_2,\dots,v_{\ell})$ with $\ell\ge 1$, called the \emph{spine}, such that 
\[
V = V(P) \cup L,
\]
where $L$ is the set of leaves of $T$. More precisely:
\begin{itemize}
    \item If $v_i$ is an internal vertex of $P$ (i.e., $2 \leq i \leq \ell-1$), then $v_i$ is adjacent to exactly $d_{v_i}-2$ leaves in $L$.
    \item If $v_i$ is an endpoint of $P$ (i.e., $i=1$ or $i=\ell$), then $v_i$ is adjacent to exactly $d_{v_i}-1$ leaves in $L$.
\end{itemize}

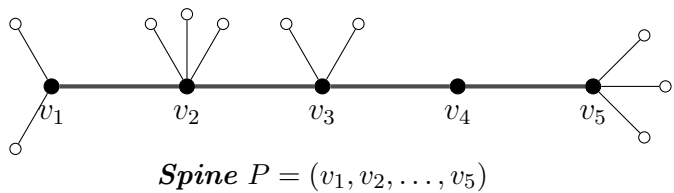
\begin{figure}[htbp]
\centering
\begin{tikzpicture}[
    scale=1.2,
    % 脊柱节点样式：黑色实心圆
    spine/.style={circle, draw, fill=black, inner sep=2pt},
    % 叶子节点样式：空心圆
    leaf/.style={circle, draw, fill=white, inner sep=1.5pt},
    % 脊柱连线：加粗以突出骨架
    spine-edge/.style={ultra thick, black!70}
]

    % --- 绘制脊柱 (Spine) ---
    \node[spine, label=below:{$v_1$}] (v1) at (0,0) {};
    \node[spine, label=below:{$v_2$}] (v2) at (1.5,0) {};
    \node[spine, label=below:{$v_3$}] (v3) at (3,0) {};
    \node[spine, label=below:{$v_4$}] (v4) at (4.5,0) {};
    \node[spine, label=below:{$v_5$}] (v5) at (6,0) {};

    \draw[spine-edge] (v1) -- (v2) -- (v3) -- (v4) -- (v5);

    % --- 绘制附属叶子 (Leaves) ---
    
    % v1 的叶子 (例如 d_v1 = 3, 连接 2 个叶子)
    \foreach \a in {120, 240} {
        \node[leaf] (l1\a) at ($(v1)+(\a:0.8)$) {};
        \draw (v1) -- (l1\a);
    }

    % v2 的叶子 (例如 d_v2 = 5, 连接 3 个叶子)
    \foreach \a in {60, 90, 120} {
        \node[leaf] (l2\a) at ($(v2)+(\a:0.8)$) {};
        \draw (v2) -- (l2\a);
    }

    % v3 的叶子 (例如 d_v3 = 4, 连接 2 个叶子，都在上面)
    \foreach \a in {60, 120} {
        \node[leaf] (l3\a) at ($(v3)+(\a:0.8)$) {};
        \draw (v3) -- (l3\a);
    }

    % v4 无额外叶子 (d_v4 = 2)

    % v5 的叶子 (端点，例如 d_v5 = 4, 连接 3 个叶子)
    \foreach \a in {45, 0, -45} {
        \node[leaf] (l5\a) at ($(v5)+(\a:0.8)$) {};
        \draw (v5) -- (l5\a);
    }

    % 标注
    \node[draw=none, fill=none] at (3, -1) {\small \textbf{Spine} $P = (v_1, v_2, \dots, v_5)$};

\end{tikzpicture}
\caption{An example of a caterpillar tree.}
\label{fig:caterpillar_example}
\end{figure}

\end{definition}
In particular, every path graph $P_n$ ($n \ge 1$) is a caterpillar, where the spine is the entire path; every star is a caterpillar where the spine is a path of length zero.

\subsection{Origin of the  Ricci matrix:} 
The entries of the matrix $R$ are derived from the Ricci flow based on the type of Ollivier's Ricci curvature on edges of trees.

\begin{definition}\label{def:lly} \cite{LLY}
Given local probability distribution $\mu_x^\alpha$ for every vertex $x$, the \emph{$\alpha$-Ricci curvature} between two adjacent vertices $x \sim y$ is defined as
\[
\kappa_{\alpha}(x, y):= 1 - \frac{W(\mu_x^{\alpha}, \mu_y^{\alpha})}{d(x, y)},
\]
where $W(\mu_x^{\alpha}, \mu_y^{\alpha})$ denotes the transportation distance between $\mu_x^{\alpha}$ and $\mu_y^{\alpha}$, and $d(x, y)$ is the weighted distance between $x$ and $y$.

The \emph{(Lin–Lu–Yau) Ricci curvature} is then defined as the limit
\[
\kappa_{xy} := \lim_{\alpha \to 1} \frac{\kappa_{\alpha}(x, y)}{1 - \alpha}.
\]
\end{definition}

 We consider the probability measure $\mu_x^\alpha$ supported in the neighbourhood of $x$ as:
\[
\mu_x^\alpha(z) = 
\begin{cases}
\alpha, & \text{if } z = x, \\[4pt]
(1 - \alpha)\dfrac{1}{d_x}, & \text{if } z \sim x, \\[4pt]
0, & \text{otherwise}.
\end{cases}
\]
For a tree $T$, this curvature $\kappa_{xy}$ admits a simple closed-form expression in terms of the edge weights and vertex degrees.  
By solving the optimal transport problem between the measures $\mu_x^\alpha$ and $\mu_y^\alpha$ and taking the limit $\alpha \to 1$, one obtains (see \cite{Baihua} for details):
\begin{equation}\label{eq:lly-1}
    \kappa_{xy} = - \left( \frac{S_x - 2 w_{xy}}{w_{xy} d_x} + \frac{S_y - 2 w_{xy}}{w_{xy} d_y} \right), 
\quad \text{where } S_v := \sum_{u \sim v} w_{vu}.
\end{equation}

\begin{definition}
Let $T=(V,E)$ be a finite tree with positive edge weights $w:E\to\mathbb{R}_{>0}$. 
The weighted tree $(T,w)$ is called \emph{discrete Einstein} if the Lin-Lu-Yau Ricci curvature $\kappa_{xy}$ (\ref{eq:lly-1}) is independent 
of the edge $xy$, i.e., $\kappa_{xy}\equiv\kappa$ for some constant $\kappa\in\mathbb{R}$.
\end{definition}
\subsection{Perron Structure of the Ricci matrix}
In this part, we give the proof of Theorem~\ref{thm:perron_structure}. 

\begin{proof}[Proof of Theorem~\ref{thm:perron_structure}]
We first establish the Perron structure. Define
\[
\tilde R_T := R_T + \alpha I,
\quad 
\alpha := \max_{e=xy} \left(\frac{1}{d_x} + \frac{1}{d_y}\right),
\]
so that $\tilde R_T \ge 0$ entrywise. 

Since $T$ is connected, its line graph is also connected, and hence $\tilde R_T$ is irreducible. By the Perron--Frobenius theorem, $\tilde R_T$ has a simple largest eigenvalue with a strictly positive eigenvector. Shifting back by $\alpha$ preserves simplicity and positivity, proving (1).

Statement (2) follows directly from Perron--Frobenius theory.

For (3), recall that the Ricci curvature of an edge $e=\{u,v\}$ is
\[
\kappa_e = -\left(\frac{S_u - 2w_e}{w_e d_u} + \frac{S_v - 2w_e}{w_e d_v}\right).
\]
If $\kappa_e \equiv \kappa$ is constant, multiplying by $w_e$ yields
\[
-\kappa w_e = \frac{S_u - 2w_e}{d_u} + \frac{S_v - 2w_e}{d_v},
\]
which is exactly $R_T w = \lambda w$ with $\lambda = -\kappa$. Thus $w$ is an eigenvector. Since $w>0$, it must be the Perron eigenvector.

Finally, (4) follows from the uniqueness (up to scaling) of the positive eigenvector.
\end{proof}

\subsection{Examples}
\begin{example}[Perron eigenpair of $S_n$]
\label{prop:star-eigenpair}
For the star graph $S_n$ with $n\ge 2$ vertices, the Perron eigenvalue $\lambda_{S_n}$ of the Ricci  matrix $R$ is given by
\[
\lambda_{S_n} = -\frac{2}{n-1} = -\frac{2}{|E|} < 0.
\]

The corresponding Perron eigenvector $w = (w_1,\dots,w_{n-1})^T$ has all components equal:
\[
w_i = 1 \quad \text{for all } i = 1,\dots,n-1 \quad (\text{up to scaling}).
\]
\end{example}

\begin{example}[Perron eigenpair of $P_n$]
\label{prop:path-eigenpair}
For the path graph $P_n$ with $n\ge 3$ vertices, the Perron eigenvalue $\lambda_{P_n}$ of the Ricci matrix $R$ is given by
\[
\lambda_{P_n} = -1 + \cos\left(\frac{\pi}{n-1}\right)<0.
\]

In particular:
\begin{itemize}
\item For $n=3$ ($P_3$), the Perron eigenvalue is
$
\lambda=-1,
$
with eigenvector $(1,1)^T$.

\item For $n=4$ ($P_4$), the Perron eigenvalue is
$
\lambda=-\frac12,
$
with eigenvector proportional to
$
(1,\sqrt2,1)^T.
$

\item As $n\to\infty$, the largest eigenvalue satisfies
$
\lambda \to 0^{-}.
$
\end{itemize}
\end{example}

\section{Bounds of $\lambda_{\max}(R_T)$ and regular tree}\label{bounds}
This section establishes quantitative estimates for the largest eigenvalue $\lambda_{\max}(R_T)$ 
of the Ricci matrix. We first give general bounds valid for any tree, then specialize to 
$d$-regular trees to obtain sharp asymptotics, and finally derive a universal upper bound 
depending only on the maximum degree.

\subsection{General bounds}
\label{subsec:general-bounds}
Recall that in Remark~\ref{rem:schr}  $R_T=\Delta-V$ where $V$ is a diagonal matrix with $V_{e,e}=\frac{2}{d_x}+\frac{2}{d_y}-2$ for $e=\{x,y\}.$
For any internal edge $e=\{x,y\},$ i.e. $d_x, d_y\ge 2$, we define $\widetilde{V}_{e,e}=\frac{2+d_x^0}{d_x}+\frac{2+d_y^0}{d_y}-2,$ where $d_x^0$ is the number of leaf edges incident to $x.$ We have the following estimate of $\lambda_{\max}.$
\begin{theorem}\label{thm:lower bound}
   For a non-star tree $T,$
   $$\min_{\mathrm{internal\ edge \ }e}(-\widetilde{V}_{e,e})\le \lambda_{\max}(R_T)\le  \max_{e\in E}(-V_{e,e}).$$ 
\end{theorem}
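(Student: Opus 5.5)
The plan is to get both bounds from the Perron--Frobenius characterization of $\lambda_{\max}$ in Theorem~\ref{thm:perron_structure}, applied to the shifted matrix $\tilde R_T=R_T+\alpha I$. We use the same shift $\alpha=\max_{e=xy}(1/d_x+1/d_y)$ as in that proof, so that $\tilde R_T$ is entrywise nonnegative and irreducible. Its Perron root is $\lambda_{\max}(R_T)+\alpha$. The upper bound will come from a row-sum estimate, and the lower bound from a Collatz--Wielandt type estimate with a well-chosen nonnegative test vector.

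\emph{Upper bound.} First compute the row sums of $R_T$. For $e=\{x,y\}$, the edge $e$ has $d_x-1$ neighbours through $x$ and $d_y-1$ through $y$, so
\[
\sum_{e'}(R_T)_{e,e'}=-\Bigl(\tfrac1{d_x}+\tfrac1{d_y}\Bigr)+\tfrac{d_x-1}{d_x}+\tfrac{d_y-1}{d_y}=2-\tfrac2{d_x}-\tfrac2{d_y}=-V_{e,e}.
\]
This is consistent with $\Delta$ having zero row sums in Remark~\ref{rem:schr}. The Perron root of a nonnegative matrix is at most its maximal row sum. Applying this to $\tilde R_T$ and subtracting $\alpha$ gives $\lambda_{\max}(R_T)\le\max_e(-V_{e,e})$. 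Concretely, evaluate $\tilde R_T w=(\lambda_{\max}+\alpha)w$ at the coordinate where the Perron vector $w>0$ is maximal.

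\emph{Lower bound.} Since $T$ is not a star, it has at least one internal edge. Let $\chi$ be the indicator vector of the set of internal edges, so $\chi\ge0$ and $\chi\ne0$.

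For an internal edge $e=\{x,y\}$, its neighbours through $x$ that are internal number $d_x-1-d_x^0$, and similarly at $y$. Hence
\[
(R_T\chi)_e=-\Bigl(\tfrac1{d_x}+\tfrac1{d_y}\Bigr)+\tfrac{d_x-1-d_x^0}{d_x}+\tfrac{d_y-1-d_y^0}{d_y}=-\widetilde V_{e,e}.
\]
Set $c=\min_{\text{internal }e}(-\widetilde V_{e,e})$. Then $(\tilde R_T\chi)_e\ge (c+\alpha)\chi_e$ on internal edges. On leaf edges $\chi_e=0$, and $(\tilde R_T\chi)_e\ge0$ by nonnegativity of $\tilde R_T$. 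So $\tilde R_T\chi\ge(c+\alpha)\chi$ entrywise.

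Now pair with the left Perron vector $u>0$ of $\tilde R_T$. Since $R_T$ is symmetric, $u$ is simply the Perron eigenvector. This gives
\[
(\lambda_{\max}+\alpha)\,u^T\chi=u^T\tilde R_T\chi\ge(c+\alpha)\,u^T\chi .
\]
Because $u^T\chi>0$, it follows that $\lambda_{\max}\ge c$.

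The only delicate point is justifying the inequality for a test vector with zero entries, since the usual Collatz--Wielandt bound is stated for positive vectors. This is handled as above: nonnegativity of the shifted matrix on the zero coordinates, together with strict positivity of the left Perron vector. Alternatively, for the symmetric $R_T$ one can use the Rayleigh quotient $\chi^TR_T\chi/\chi^T\chi$, which is a weighted average of the values $(R_T\chi)_e$ over internal edges and is therefore at least $c$.
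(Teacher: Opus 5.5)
Your proof is correct. The lower bound is essentially the paper's argument. The paper uses exactly the indicator vector $\chi$ of the internal edges, computes $\chi^\top R_T\chi\ge \min_{e}(-\widetilde V_{e,e})\,|\chi|^2$, and concludes by the variational principle. That is the Rayleigh-quotient alternative you mention at the end. Your pairing with the positive Perron vector is an equivalent Collatz--Wielandt form of the same estimate; it would work even without symmetry, but symmetry of $R_T$ makes the Rayleigh version immediate.

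The upper bound takes a slightly different route. The paper writes $R_T=\Delta-V$ and applies Weyl's inequality $\lambda_{\max}(R_T)\le\lambda_{\max}(\Delta)+\lambda_{\max}(-V)$, tacitly using $\lambda_{\max}(\Delta)\le 0$. You instead bound the Perron root of the shifted nonnegative matrix by its maximal row sum. Your computation $\sum_{e'}(R_T)_{e,e'}=-V_{e,e}$ is precisely the fact that justifies the step the paper leaves unstated: $\Delta$ has zero row sums and nonnegative off-diagonal entries, so $-\Delta$ is a weighted Laplacian and $\lambda_{\max}(\Delta)=0$. Your version is therefore a bit more self-contained. The paper's version makes explicit the Schr\"odinger-operator viewpoint, in which the upper bound is just the maximum of the negative potential.
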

\begin{proof}
Note that $$\lambda_{\max}(R_T)\leq \lambda_{\max}(\Delta)+\lambda_{\max}(-V).$$ This proves the upper bound estimate of $\lambda_{\max}(R_T)$. 

For the lower bound estimate.
    Let $x$ be the indicator vector on the set of internal edges, i.e. $x_e=1$ for any internal edge $e$, and $x_e=0,$ otherwise.
    Note that $$x^\top R_T x\geq \min_{\mathrm{internal\ edge \ }e}(-\widetilde{V}_{e,e}) |x|^2.$$ This proves the result by the variational principle.
\end{proof}

\begin{corollary}
    Let $T$ be a tree such that internal vertices are of degree $d,$ $d\geq 4,$ and for each internal edge $e={x,y},$ $d_x^0+d_y^0\leq d-1.$ Then 
    $$\frac{d-3}{d}\leq \lambda_{\max}\leq 2-\frac{4}{d}.$$
\end{corollary}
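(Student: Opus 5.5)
The plan is to apply Theorem~\ref{thm:lower bound} directly and evaluate both sides under the hypotheses. Those hypotheses are that every internal vertex has degree exactly $d$ (with $d\ge 4$), and that $d_x^0+d_y^0\le d-1$ on every internal edge $e=\{x,y\}$. I will read the statement as tacitly assuming $T$ is not a star, since Theorem~\ref{thm:lower bound} requires this. For a star, Example~\ref{prop:star-eigenpair} gives $\lambda_{\max}=-2/|E|<0<\frac{d-3}{d}$, so the lower bound would fail. For a non-star tree at least one internal edge exists, so the minimum in the lower bound is taken over a nonempty set.

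\emph{Lower bound.} For an internal edge $e=\{x,y\}$ we have $d_x=d_y=d$, so
\[
-\widetilde V_{e,e}=2-\frac{2+d_x^0}{d}-\frac{2+d_y^0}{d}=2-\frac{4+d_x^0+d_y^0}{d}.
\]
The hypothesis $d_x^0+d_y^0\le d-1$ then gives $-\widetilde V_{e,e}\ge 2-\frac{d+3}{d}=\frac{d-3}{d}$. This bound is uniform over all internal edges, so it holds for their minimum. The left inequality of Theorem~\ref{thm:lower bound} then yields $\lambda_{\max}(R_T)\ge \frac{d-3}{d}$.

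\emph{Upper bound.} We have $-V_{e,e}=2-\frac{2}{d_x}-\frac{2}{d_y}$, and there are only two kinds of edges. For an internal edge this equals $2-\frac4d$. A leaf edge joins a leaf (degree $1$) to an internal vertex (degree $d$), because $T$ is not a star and so no edge joins two leaves. For such an edge the value is $2-2-\frac2d=-\frac2d$. Since $d\ge4$, we have $2-\frac4d\ge 1>-\frac2d$, so $\max_e(-V_{e,e})=2-\frac4d$. The right inequality of Theorem~\ref{thm:lower bound} gives $\lambda_{\max}(R_T)\le 2-\frac4d$.

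There is no real obstacle here. The only points needing care are the non-star assumption, which makes the set of internal edges nonempty and rules out edges joining two leaves, and the comparison $2-\frac4d>-\frac2d$ that identifies the maximizing edge type in the upper bound.
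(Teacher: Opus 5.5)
Your proof is correct and follows the paper's argument. You substitute $d_x=d_y=d$ and $d_x^0+d_y^0\le d-1$ into $-\widetilde V_{e,e}$ for the lower bound, and read off $\max_e(-V_{e,e})=2-\frac{4}{d}$ for the upper bound, both via Theorem~\ref{thm:lower bound}. Your remark that $T$ must tacitly be a non-star is also correct and is left implicit in the paper: $K_{1,d}$ satisfies the hypotheses vacuously but has $\lambda_{\max}=-2/d<\frac{d-3}{d}$.
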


\begin{proof}
    One easily verifies that for each internal edge
    $$\widetilde{V}_{e,e}=\frac{4+d_x^0+d_y^0-2d}{d}\leq \frac{3-d}{d}.$$ The corollary follows from  Theorem~\ref{thm:lower bound}.
\end{proof}

\subsection{Bounds for $d$-regular trees}
\label{subsec:regular-trees}

Let $d_c$ be the combinatorial distance on a graph, i.e. $d_c(x,y):=\inf\{n:\exists x=x_0\sim x_1\sim \cdots \sim x_n=y\}.$ Denote by $B_L(x):=\{y\in V: d_c(x,y)\leq L\}$ the ball of radius $L$ centered at $x.$ For the $d$-regular tree $\mathbb{T}_d$, denote by $T_{d,L}$ be the induced subgraph on $B_L(o)$ where $o$ is the root.
 \begin{remark}\label{rem:bigtree}
    This produces many examples with positive $\lambda_{\max},$ e.g. a large class of subtrees (such as $T_{d,L}$).
\end{remark}
\begin{figure}[htbp]
\begin{center}
\begin{tikzpicture}[
    scale=1.2,
    v/.style={circle, draw, fill=black, inner sep=1.5pt},
    l/.style={circle, draw, fill=gray!20, inner sep=1.5pt},
    ie/.style={thick, black!60},
    le/.style={thick, blue!60, dashed},
    decorate
]

% 根
\node[v] (o) at (0,0) {};

% ========== 左子树 (u1) ==========
\node[v] (u1) at (-3.0,-1.2) {};
\node[v] (v1) at (-4.0,-2.4) {};
\node[v] (v2) at (-2.0,-2.4) {};
\node[l] (l1) at (-4.4,-3.6) {};
\node[l] (l2) at (-3.6,-3.6) {};
\node[l] (l3) at (-2.4,-3.6) {};
\node[l] (l4) at (-1.6,-3.6) {};

\draw[ie] (o) -- (u1);
\draw[ie] (u1) -- (v1); \draw[ie] (u1) -- (v2);
\draw[le] (v1) -- (l1); \draw[le] (v1) -- (l2);
\draw[le] (v2) -- (l3); \draw[le] (v2) -- (l4);

% ========== 中间子树 (u2) ==========
\node[v] (u2) at (0,-1.2) {};
\node[v] (v3) at (-0.8,-2.4) {};
\node[v] (v4) at (0.8,-2.4) {};
\node[l] (l5) at (-1.2,-3.6) {};
\node[l] (l6) at (-0.4,-3.6) {};
\node[l] (l7) at (0.4,-3.6) {};
\node[l] (l8) at (1.2,-3.6) {};

\draw[ie] (o) -- (u2);
\draw[ie] (u2) -- (v3); \draw[ie] (u2) -- (v4);
\draw[le] (v3) -- (l5); \draw[le] (v3) -- (l6);
\draw[le] (v4) -- (l7); \draw[le] (v4) -- (l8);

% ========== 右子树 (u3) ==========
\node[v] (u3) at (3.0,-1.2) {};
\node[v] (v5) at (2.0,-2.4) {};
\node[v] (v6) at (4.0,-2.4) {};
\node[l] (l9)  at (1.6,-3.6) {};
\node[l] (l10) at (2.4,-3.6) {};
\node[l] (l11) at (3.6,-3.6) {};
\node[l] (l12) at (4.4,-3.6) {};

\draw[ie] (o) -- (u3);
\draw[ie] (u3) -- (v5); \draw[ie] (u3) -- (v6);
\draw[le] (v5) -- (l9);  \draw[le] (v5) -- (l10);
\draw[le] (v6) -- (l11); \draw[le] (v6) -- (l12);

% 分层标注
\draw[decorate,decoration={brace,amplitude=6pt}]
    (-5.2,0.3) -- (-5.2,-0.3) node[midway, left, xshift=-8pt] {depth 0};
\draw[decorate,decoration={brace,amplitude=6pt}]
    (-5.2,-0.9) -- (-5.2,-1.5) node[midway, left, xshift=-8pt] {depth 1};
\draw[decorate,decoration={brace,amplitude=6pt}]
    (-5.2,-2.1) -- (-5.2,-2.7) node[midway, left, xshift=-8pt] {depth 2};
\draw[decorate,decoration={brace,amplitude=6pt}]
    (-5.2,-3.3) -- (-5.2,-3.9) node[midway, left, xshift=-8pt] {depth $L$ (leaves)};

\end{tikzpicture}

\caption{An illustration of $T_{d, L}$ with $d=3$, $L=4$.}
\end{center}
\end{figure}
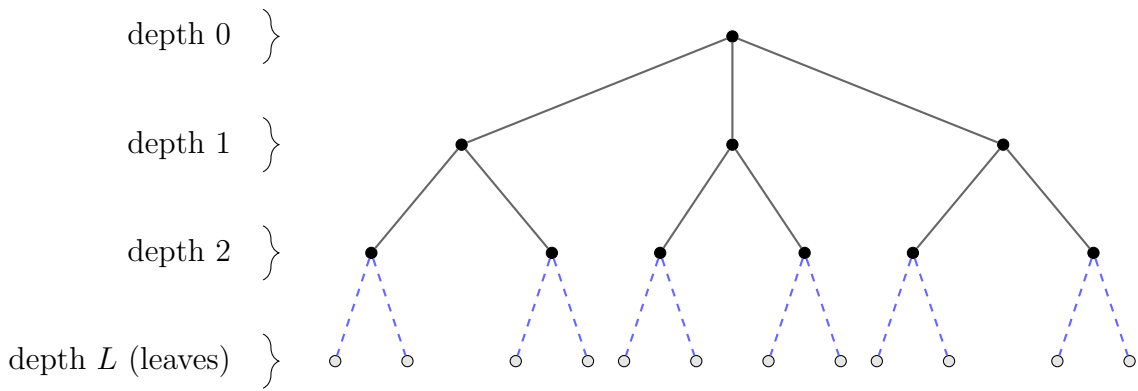

We have more refined estimates for the balls in a regular tree.
\begin{proposition}\label{prop:td1}
Let $T_{d,L}$ be the induced subgraph on $B_L(o)$ in the $d$-regular tree ($d\ge 3$). 
Then $$1-\frac{3}{d}\leq \lambda_{\max}(R_{T_{d,L}})\leq  1-\frac{4}{d}+\frac{2\sqrt{d-1}}{d}.$$
\end{proposition}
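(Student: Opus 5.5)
The plan is to prove the two inequalities separately. Throughout, assume $L\ge 2$, so that $T_{d,L}$ is not a star; for $L=1$ the tree is the star $S_{d+1}$ and Example~\ref{prop:star-eigenpair} gives $\lambda_{\max}=-2/d$, for which the lower bound fails.

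\emph{Lower bound.} I would apply Theorem~\ref{thm:lower bound} directly. Every internal vertex of $T_{d,L}$ has degree $d$. Vertices at depth $\le L-2$ have no incident leaf edges, and vertices at depth $L-1$ have $d^0=d-1$ leaf edges. For an internal edge $e=\{x,y\}$, we have
\[
-\widetilde V_{e,e}=2-\frac{2+d_x^0}{d}-\frac{2+d_y^0}{d}.
\]
This equals $2-\frac4d$ when neither endpoint lies at depth $L-1$. It equals $2-\frac2d-\frac{d+1}{d}=1-\frac3d$ for an edge joining depth $L-2$ to depth $L-1$; for $L=2$ these are exactly the root edges. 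Both endpoints can never lie at depth $L-1$. Hence the minimum over internal edges is $1-\frac3d$, and the theorem gives $\lambda_{\max}(R_{T_{d,L}})\ge 1-\frac3d$.

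\emph{Upper bound: entrywise comparison.} Let $B$ be the adjacency matrix of the line graph of $T=T_{d,L}$, and set $M:=\frac1d(B-2I)$.

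Off the diagonal, $R_T$ and $M$ agree. Two edges sharing a vertex $x$ share an internal vertex, and $d_x=d$, so both entries equal $1/d$.

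On the diagonal, an internal edge has $(R_T)_{e,e}=-2/d=M_{e,e}$. A leaf edge has $(R_T)_{e,e}=-(1+\frac1d)\le -\frac2d$, since $d\ge 1$.

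So $R_T\le M$ entrywise, with equal nonnegative off-diagonal parts. After adding a common multiple of $I$, both matrices are nonnegative and irreducible, and Perron--Frobenius monotonicity gives $\lambda_{\max}(R_T)\le\lambda_{\max}(M)=\frac{\lambda_{\max}(B)-2}{d}$.

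\emph{Upper bound: spectral radius of $B$.} Let $N$ be the $|V|\times|E|$ vertex–edge incidence matrix. Then $B=N^\top N-2I$, and the nonzero spectrum of $N^\top N$ coincides with that of $NN^\top=D+A$, where $D$ is the degree matrix and $A$ the adjacency matrix of $T$. Hence
\[
\lambda_{\max}(B)\le \lambda_{\max}(D+A)-2\le d+\lambda_{\max}(A)-2 .
\]
It then suffices to show $\lambda_{\max}(A)\le 2\sqrt{d-1}$ for any finite tree of maximum degree $d$. Substituting gives
\[
\lambda_{\max}(R_T)\le \frac{d+2\sqrt{d-1}-4}{d}=1-\frac4d+\frac{2\sqrt{d-1}}{d}.
\]

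\emph{Main obstacle.} The step requiring real work is the bound $\lambda_{\max}(A)\le 2\sqrt{d-1}$; this is classical but I would prove it rather than just quote it. I would root $T$ at $o$ and orient each edge from parent to child. I would then use the Schur test, or the quadratic-form inequality $2ab\le t a^2+t^{-1}b^2$ with $t=\sqrt{d-1}$ chosen so that the child side receives the $t^{-1}$ weight. Each non-root vertex has at most $d-1$ children and one parent, which gives
\[
2\sum_{u\to v}f_uf_v\le \sum_u \Bigl(\tfrac{\#\mathrm{children}(u)}{t}+t\cdot \mathbf 1_{u\ne o}\Bigr)f_u^2 .
\]
The coefficient is at most $2\sqrt{d-1}$, except possibly at the root, which has $d$ children. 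To handle the root, I would re-center the weighting at a leaf instead of $o$, so that every vertex has at most $d-1$ children. This yields $f^\top Af\le 2\sqrt{d-1}\,|f|^2$.
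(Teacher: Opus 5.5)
Your proposal is correct. The lower bound follows the paper's argument. The paper cites the corollary following Theorem~\ref{thm:lower bound}, whose hypothesis $d\ge 4$ formally misses $d=3$; your direct evaluation of $-\widetilde V_{e,e}$ covers that case. You are also right that $L\ge 2$ is needed, since $T_{d,1}$ is a star with $\lambda_{\max}=-2/d<1-3/d$; the statement leaves this restriction implicit.

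For the upper bound you take a genuinely different route.
\begin{itemize}
\item The paper uses uniqueness of the Perron vector to make it radially symmetric. It then reduces the eigen-equation via $x_k=(d-1)^{k/2}f(k)$ to an $L\times L$ tridiagonal matrix and bounds its smallest eigenvalue by Gershgorin.
\item You compare $R_T$ entrywise with $\frac1d(B-2I)$, pass through the incidence matrix to the signless Laplacian $D+A$, and reduce to the classical bound $\lambda_{\max}(A)\le 2\sqrt{d-1}$ for trees of maximum degree $d$.
\end{itemize}
Your route uses no symmetry, so it gives the same bound (even strictly) for every tree whose internal vertices all have degree $d$. It also explains the constant $2\sqrt{d-1}$ as the tree adjacency bound. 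Its limitation is that it needs all internal degrees to be equal. With mixed degrees the off-diagonal entries $1/d_x$ exceed $1/\mathcal D$, so the entrywise comparison goes the wrong way. The paper handles general trees separately, in Theorem~\ref{thm:degree-bound}, by a weighted Cauchy--Schwarz argument. The paper's reduction is specific to $T_{d,L}$, but it pays off elsewhere: the same tridiagonal matrix is reused with a Rayleigh quotient in Proposition~\ref{prop:td2} to show the bound is asymptotically sharp. Your method does not give that by itself.

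Three small points.
\begin{itemize}
\item Your phrase that ``the child side receives the $t^{-1}$ weight'' is the reverse of your (correct) display. There, each vertex gets $t^{-1}$ per child edge and $t$ from its parent edge; the opposite assignment would not yield $2\sqrt{d-1}$.
\item Re-rooting at a leaf is unnecessary. With no parent edge, the root's coefficient is at most $d/\sqrt{d-1}$, and $d/\sqrt{d-1}\le 2\sqrt{d-1}$ for $d\ge 2$.
\item $M-R_T$ is a nonnegative diagonal matrix, hence positive semidefinite. So $\lambda_{\max}(R_T)\le\lambda_{\max}(M)$ holds without appeal to Perron--Frobenius monotonicity.
\end{itemize}
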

\begin{proof}
    The lower bound follows from the previous corollary.

    For the upper bound estimate, note that $V_{e,e}=\frac{4}{d}-2$ for an internal edge $e$ and $V_{e,e}=\frac{2}{d}$ for a leaf edge $e$.  By Remark~\ref{rem:schr}, we consider the line graph $\hat{T}$ of $T=T_{d,L}.$ Let $O=\{o_1,o_2,\cdots, o_d\}$ where $o_i,$ $1\leq i\leq d$, are vertices in $\hat{T}$ corresponding to edges incident to the root $o.$  Moreover, let $\hat{f}$ be the Perron eigenvector to the eigenvalue $\lambda_{\max}$ of matrix $(R_T)$. By the uniqueness of the Perron vector, $\hat{f}$ is radially symmetric since $B_L(o)$ is radially symmetric.  That is, $\hat{f}(e)=f(k)$ for any vertex $e$ in $\hat{T}$ with $k = d_c^{\hat{T}}(e,O)$, where $f:\{0,1,\cdots, L-1\}\to \mathbb{R}$, and $d_c^{\hat{T}}(\cdot,O)$ is the combinatorial distance to the set $O.$  
    Hence, the eigen-equation $(\Delta-V)\hat{f}=\lambda_{\max} \hat{f}$ on $\hat{T}$ is equivalent to the eigenvalue problem on the weighted path graph $P_{L}$ with potential.  

The depth $k = d_c^{\hat{T}}(e, O)$ takes values:
 $k = 0$: edges incident to the root (there are $d$ such edges); $1 \le k \le L-2$: internal edges at depth $k$; $k = L-1$: leaf edges.

Let $x_k = (d-1)^{k/2} f(k)$ for $0 \le k \le L-1$. Then the eigen-equation transforms into $A \mathbf{x} = -\lambda_{\max} \mathbf{x}$, where $\mathbf{x} = (x_0, x_1, \dots, x_{L-1})^\top$ and $A=(a_{ij})_{0 \le i,j \le L-1}$ is the $L \times L$ tridiagonal matrix with entries: 
$$a_{00}=\frac{3}{d}-1,a_{{L-1}{L-1}}=\frac{3}{d}, a_{ii}=\frac{4}{d}-1, 1\leq i\leq L-2, a_{k,k+1}=a_{k+1,k}=-\frac{\sqrt{d-1}}{d}, 0\leq k\leq L-2,$$ and all other entries are zero.
I.e.
\begin{align}\label{eq:matrixA}
A= \begin{pmatrix}
\frac{3}{d}-1 & -\frac{\sqrt{d-1}}{d} & 0 & \cdots & 0 \\
-\frac{\sqrt{d-1}}{d} & \frac{4}{d}-1 & -\frac{\sqrt{d-1}}{d} & \ddots & \vdots \\
0 & -\frac{\sqrt{d-1}}{d} & \ddots & \ddots & 0 \\
\vdots & \ddots & \ddots & \frac{4}{d}-1 & -\frac{\sqrt{d-1}}{d} \\
0 & \cdots & 0 & -\frac{\sqrt{d-1}}{d} & \frac{3}{d}
\end{pmatrix}
\end{align}
Since the Perron eigenvector $\hat{f}$ is positive, $\mathbf{x}$ is positive, and therefore $-\lambda_{\max}$ is the minimal eigenvalue of $A$.

By Gershgorin Disc Theorem, the eigenvalues of $A$ must lie in the union of intervals $I_i = [a_{ii} - \sum_{j\neq i} |a_{ij}|, a_{ii} + \sum_{j\neq i} |a_{ij}|]$. For $1 \leq i \leq L-2$, the interval is $\left[ \frac{4-2\sqrt{d-1}}{d}-1, \frac{4+2\sqrt{d-1}}{d}-1 \right]$. For $i=0,$ the interval is $\left[ \frac{3-\sqrt{d-1}}{d}-1, \frac{3+\sqrt{d-1}}{d}-1 \right].$ For $i=L-1,$ the interval is $\left[ \frac{3-\sqrt{d-1}}{d}, \frac{3+\sqrt{d-1}}{d} \right]$.

Since for $d \ge 3$, this yields that
$$\lambda_{\min}(A)\geq \frac{4-2\sqrt{d-1}}{d}-1.$$ This proves the result.
   
\end{proof}
Next we derive a lower bound of $\lambda_{\max}(R_{T_{d,L}})$ using a Rayleigh quotient argument.

\begin{proposition}\label{prop:td2}
Let $T_{d,L}$ be the induced subgraph on $B_L(o)$ in the $d$-regular tree ($d\ge 3$). 

Then 
\[\lambda_{\max}(R_{T_{d,L}}) \ge 1 - \frac{4}{d} + \frac{2\sqrt{d-1}}{d} - O\left(\frac{1}{L}\right).
\]
\end{proposition}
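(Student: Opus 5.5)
We bound $\lambda_{\max}(R_{T_{d,L}})$ from below through the reduction already set up in the proof of Proposition~\ref{prop:td1}. Radial functions on the line graph correspond to vectors $\mathbf{x}\in\mathbb{R}^L$ via $x_k=(d-1)^{k/2}f(k)$. Under this correspondence, $-\lambda_{\max}(R_{T_{d,L}})$ equals $\lambda_{\min}(A)$, with $A$ the symmetric tridiagonal matrix \eqref{eq:matrixA}. By the Rayleigh quotient characterization,
\[
\lambda_{\max}(R_{T_{d,L}})=-\lambda_{\min}(A)\ge -\frac{\mathbf{x}^\top A\mathbf{x}}{\mathbf{x}^\top\mathbf{x}}
\]
for every nonzero $\mathbf{x}$. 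It therefore suffices to exhibit a test vector $\mathbf{x}$ with
\[
\frac{\mathbf{x}^\top A\mathbf{x}}{\mathbf{x}^\top\mathbf{x}}\le \frac{4-2\sqrt{d-1}}{d}-1+O(1/L).
\]

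We write $A=\big(\frac{4}{d}-1\big)I-\frac{\sqrt{d-1}}{d}J+D$. Here $J$ is the adjacency matrix of the path on $L$ vertices. The correction $D$ is diagonal with only two nonzero entries, $D_{00}=-\frac1d$ and $D_{L-1,L-1}=1-\frac1d$. We take the sine test vector
\[
x_k=\sin\Big(\frac{(k+1)\pi}{L+1}\Big),\qquad 0\le k\le L-1,
\]
which is the top eigenvector of $J$, with eigenvalue $2\cos\frac{\pi}{L+1}$. This gives
\[
\frac{\mathbf{x}^\top A\mathbf{x}}{\mathbf{x}^\top\mathbf{x}}=\frac4d-1-\frac{2\sqrt{d-1}}{d}\cos\frac{\pi}{L+1}+\frac{-\frac1d x_0^2+(1-\frac1d)x_{L-1}^2}{\mathbf{x}^\top\mathbf{x}}.
\]

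We then estimate the two error terms. First, $1-\cos\frac{\pi}{L+1}=O(L^{-2})$. Second, $\mathbf{x}^\top\mathbf{x}=\frac{L+1}{2}$, while $x_{L-1}^2=\sin^2\frac{\pi}{L+1}=O(L^{-2})$. Hence the boundary contribution is $O(L^{-3})$. The $x_0^2$ term is negative, so it only helps and can simply be dropped. Combining these gives
\[
\lambda_{\max}\ge 1-\frac4d+\frac{2\sqrt{d-1}}{d}-O(L^{-2}),
\]
which is stronger than the claimed $O(1/L)$.

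The main obstacle is the positive boundary potential $1-\frac1d$ at the leaf level $k=L-1$. A naive test vector such as the constant vector picks up an $O(1/L)$ penalty from this entry. The sine vector vanishes to first order at the boundary, which suppresses this penalty. The step I would check carefully is the indexing: the vector must vanish at the fictitious index $k=L$ and be small at $k=L-1$, not at $k=0$. Even if one uses a cruder vector such as the constant vector, the penalty is of order $\frac{1}{L}$, which still yields the stated $O(1/L)$ bound.
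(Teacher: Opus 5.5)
Your proof is correct. It uses the same framework as the paper: the variational principle applied to the reduced tridiagonal matrix $A$ of \eqref{eq:matrixA}. The only difference is the choice of test vector.

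The paper plugs in the constant vector $(1,\dots,1)$, for which $\mathbf{x}^\top A\mathbf{x}=(\frac4d-1)(L-2)+\frac6d-1-2(L-1)\frac{\sqrt{d-1}}{d}$. Dividing by $L$ leaves exactly the $O(1/L)$ penalty you anticipated, namely $\frac1L\bigl(1-\frac2d+\frac{2\sqrt{d-1}}{d}\bigr)$. Here $1-\frac2d$ comes from the diagonal defect $D$, and $\frac{2\sqrt{d-1}}{d}$ from the hopping terms missing at the two ends of the path.

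Your sine vector is an exact eigenvector of $J$ and is $O(1/L)$ at both ends. So the $D$-correction drops to $O(L^{-3})$, and only the gap $1-\cos\frac{\pi}{L+1}=O(L^{-2})$ remains. Your computations check out: $\mathbf{x}^\top\mathbf{x}=\frac{L+1}{2}$, $x_{L-1}=\sin\frac{\pi}{L+1}$, and the favorable $-\frac1d x_0^2$ term can be discarded.

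What you gain is the sharper rate $O(L^{-2})$, the natural order given the $\pi^2/L^2$ spectral gap of a path. What the paper gains is a one-line computation that already suffices for the stated $O(1/L)$ bound and for the limit in Remark~\ref{remarik:limitL}.

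One minor point applies to both arguments. For this lower bound you do not need the Perron-vector identification $-\lambda_{\max}(R_{T_{d,L}})=\lambda_{\min}(A)$. The radial lift $\hat f(e)=(d-1)^{-k/2}x_k$ (with $k$ the depth of $e$) satisfies $\|\hat f\|^2=d\,\mathbf{x}^\top\mathbf{x}$ and $\langle \hat f,R_{T_{d,L}}\hat f\rangle=-d\,\mathbf{x}^\top A\mathbf{x}$. This gives $\lambda_{\max}(R_{T_{d,L}})\ge-\mathbf{x}^\top A\mathbf{x}/\mathbf{x}^\top\mathbf{x}$ directly for every nonzero $\mathbf{x}$.
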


\begin{proof}
Let $A$ be the $L \times L$ tridiagonal matrix defined by Equation~\ref{eq:matrixA}. We prove
\[
\lambda_{\min}(A) \le \frac{4-2\sqrt{d-1}}{d} - 1 + O\left(\frac{1}{L}\right).
\]

For any vector $x \neq 0$ we have
\[
\lambda_{\min}(A) \le \frac{x^T A x}{x^T x}.
\]
Taking $x = (1,1,\dots,1)^L$, we compute
\begin{align*}
x^T A x
&= \sum_{i=1}^L a_{ii}
+ 2\sum_{i=1}^{L-1} a_{i,i+1}\\
&= \left(\frac{4}{d}-1\right)(L-2)
+ \frac{6}{d}-1
- 2(L-1)\frac{\sqrt{d-1}}{d}.
    \end{align*}
Dividing by $x^T x = L$, we obtain
\[
\lambda_{\min}(A)
\le \frac{x^T A x}{x^T x}
= \frac{4-2\sqrt{d-1}}{d}-1 + O\!\left(\frac{1}{L}\right).
\]
Hence, 
\[
\lambda_{\max}(R_T)
\ge 1-\frac{4}{d}+\frac{2\sqrt{d-1}}{d} - O\!\left(\frac{1}{L}\right).
\]

\end{proof}

\begin{remark}\label{remarik:limitL}
By Proposition~\ref{prop:td1} and Proposition~\ref{prop:td2}, for $d$-regular tree $T_{d, L}$ with depth $L$ and $d\geq 3,$
\[
\lim_{L \to \infty} \lambda_{\max}(R_{T_{d, L}}) = 1 - \frac{4}{d} + \frac{2\sqrt{d-1}}{d}.
\]
In particular:
\begin{itemize}
    \item For $d = 3,4$, $\lambda_{\max}(R_{T_{d, L}}) < 1$ for all $L$;
    \item For $d = 5$, $\lambda_{\max}(R_{T_{d, L}}) \to 1$ as $L \to \infty$;
    \item For $d \ge 6$, $\lambda_{\max}(R_{T_{d, L}}) > 1$ for sufficiently large $L$, with a maximum of approximately $1.236$ attained at $d = 19$.
\end{itemize}
\end{remark}

\begin{remark}
  Let $\mathbb{T}_d$ be the infinite $d$-regular tree. In fact, by standard arguments in spectral theory, one can prove that 
  \[
  \sup \operatorname{Spec}(R_{\mathbb{T}_d}) = 1 - \frac{4}{d} + \frac{2\sqrt{d-1}}{d}.
  \] 
  
  This yields that
\[
\lim_{L \to \infty} \lambda_{\max}(R_{T_{d, L}}) = \sup \operatorname{Spec}(R_{\mathbb{T}_d}).
\]
    
\end{remark}

\subsection{Universal upper bound in terms of maximum degree}
\label{subsec:max-degree-bound}

In this part, we prove that $\lambda_{\max}(R_T)$ is bounded
above by a function of the maximum vertex degree $\mathcal D$ alone
(Theorem~\ref{thm:degree-bound}).

We first show that the quadratic form $\langle f,R_Tf\rangle$ decomposes as a sum over vertices.
\begin{proposition}[Edgewise reduction]\label{prop:edgewise-reduction}
Let \(T\) be a finite tree with maximum degree \(\mathcal D\ge 3\), and let
\(f\in \mathbb R^{E(T)}\) be a unit vector. For each vertex \(v\in V(T)\), write
\[
S_v:=\sum_{e\ni v}f_e,
\qquad
A_v:=\sum_{e\ni v}f_e^2,
\qquad
d_v:=\deg_T(v).
\]
Choose a root \(r\) with \(d_r\ge 3\), and orient every edge away from \(r\).
For each non-root vertex \(v\), let \(e_p^{(v)}\) denote the parent edge and let
\(E_c^{(v)}\) denote the set of child edges. Then:

\begin{enumerate}
\item The quadratic form of \(R_T\) admits the vertex decomposition
\[
\langle f,R_Tf\rangle
=
\sum_{v\in V(T)}\frac{S_v^2-2A_v}{d_v}.
\]

\item For any choice of parameters \(\alpha_v>0\) respect to vertex $v$, define
\[
A(d,\alpha):=\frac{\left(1+\frac1\alpha\right)(d-1)-2}{d},
\qquad
B(d,\alpha):=\frac{\alpha-1}{d}.
\]
Then
\[
\frac{S_v^2-2A_v}{d_v}
\le
B(d_v,\alpha_v) f_{e_p^{(v)}}^2
+
A(d_v,\alpha_v)\sum_{e\in E_c^{(v)}}f_e^2
\qquad (v\neq r),
\]
and
\[
\frac{S_r^2-2A_r}{d_r}
\le
\frac{d_r-2}{d_r}\sum_{e\in E_c^{(r)}}f_e^2.
\]
After regrouping the edge contributions, one obtains
\[
\langle f,R_Tf\rangle\le \max_{e\in \vec E} C_e,
\]
where
\[
C_e=
\begin{cases}
A(d_u,\alpha_u)+B(d_v,\alpha_v), & e=(u\to v),\ u\neq r,\\[4pt]
\dfrac{d_r-2}{d_r}+B(d_v,\alpha_v), & e=(r\to v).
\end{cases}
\]

\item For the linear choice \(\alpha_d=Kd+1\) with \(0<K<1/3\), define
\[
C(d,K):=
\frac{\left(1+\frac{1}{Kd+1}\right)(d-1)-2}{d}+K.
\]
Then every root-edge coefficient is dominated by the corresponding internal-edge model:
\[
\frac{d-2}{d}+K<C(d,K)
\qquad (d\ge 3).
\]
Consequently,
\[
\langle f,R_Tf\rangle\le \max_{1\le d\le \mathcal D,\ d\in\mathbb Z} C(d,K).
\]
\end{enumerate}
\end{proposition}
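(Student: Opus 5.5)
The plan is to prove the three parts in order. Part (1) is a direct expansion of the quadratic form, part (2) is a weighted Young/Cauchy--Schwarz estimate at each vertex followed by regrouping the terms by edge, and part (3) is an explicit computation once $\alpha_v$ depends linearly on $d_v$.

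\emph{Step 1 (vertex decomposition).} Write
\[
\langle f,R_Tf\rangle=\sum_e (R_T)_{ee}f_e^2+\sum_{e\neq e'}(R_T)_{ee'}f_ef_{e'}
\]
and sort every term by the vertex that produces it. The diagonal entry $-(1/d_x+1/d_y)$ of $e=\{x,y\}$ splits into a contribution $-f_e^2/d_x$ at $x$ and $-f_e^2/d_y$ at $y$. Two distinct edges meet in at most one vertex $x$, and then they contribute $f_ef_{e'}/d_x$ for each of the two orders. Hence vertex $v$ collects
\[
\frac1{d_v}\Bigl(\sum_{e\neq e'\ni v}f_ef_{e'}-A_v\Bigr)=\frac{1}{d_v}\bigl(S_v^2-A_v-A_v\bigr),
\]
which is exactly the claimed formula.

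\emph{Step 2 (local estimates and regrouping).} For $v\neq r$ write $S_v=f_{e_p}+\sum_{c\in E_c^{(v)}}f_c$. Apply $(a+b)^2\le(1+\alpha)a^2+(1+1/\alpha)b^2$ with $\alpha=\alpha_v$, then Cauchy--Schwarz $(\sum_c f_c)^2\le (d_v-1)\sum_c f_c^2$. Subtracting $2A_v=2f_{e_p}^2+2\sum_c f_c^2$ and dividing by $d_v$ gives precisely $B(d_v,\alpha_v)f_{e_p}^2+A(d_v,\alpha_v)\sum_c f_c^2$.

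Leaves ($d_v=1$, no children) need a separate check, since there the left side is $-f_{e_p}^2$. The bound still holds because $B(1,\alpha)=\alpha-1\ge -1$ for $\alpha>0$. At the root, all edges are children, and $S_r^2\le d_rA_r$ gives the coefficient $(d_r-2)/d_r$.

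Now sum over all vertices. Each oriented edge $e=(u\to v)$ appears exactly twice: once as a child edge of $u$, with coefficient $A(d_u,\alpha_u)$ (or $(d_r-2)/d_r$ if $u=r$), and once as the parent edge of $v$, with coefficient $B(d_v,\alpha_v)$. So $\langle f,R_Tf\rangle\le\sum_e C_ef_e^2\le\max_e C_e$, using $\sum_e f_e^2=1$. This is valid even if some $C_e$ are negative.

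\emph{Step 3 (linear choice).} With $\alpha_v=Kd_v+1$ we get $B(d_v,\alpha_v)=K$ for every $v$. Therefore $C_e=C(d_u,K)$ for non-root tails $u$, and $C_e=(d_r-2)/d_r+K$ for root edges.

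The comparison $(d-2)/d+K<C(d,K)$ reduces, after multiplying by $d$, to
\[
\frac{d-1}{Kd+1}>1,\qquad\text{i.e.}\qquad d(1-K)>2.
\]
This holds for $d\ge3$ and $K<1/3$, since then $d(1-K)>3\cdot\tfrac23=2$. Because $d_r\ge3$, every root coefficient is dominated by $C(d_r,K)$. Every tail degree lies in $\{1,\dots,\mathcal D\}$, so the maximum over edges is at most $\max_{1\le d\le\mathcal D}C(d,K)$.

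The argument has no deep obstacle. The main care is the bookkeeping in Step 2: each edge must get exactly one child-coefficient and one parent-coefficient, and the degenerate cases (leaves, and the root with no parent edge) must be checked to still satisfy the claimed inequalities.
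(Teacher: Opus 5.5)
Your proposal is correct and follows the same route as the paper. Both arguments use the vertex decomposition of the quadratic form, the weighted Cauchy bound $S_v^2\le(1+\alpha_v)f_{e_p^{(v)}}^2+(1+1/\alpha_v)(d_v-1)\sum_{e\in E_c^{(v)}}f_e^2$ together with $S_r^2\le d_rA_r$ at the root, edge-by-edge regrouping, and $B(d,Kd+1)=K$ to reduce the root comparison to $(1-K)d>2$. Your Step 1 bookkeeping (ordered off-diagonal pairs give $S_v^2-A_v$, the diagonal gives $-A_v$) is actually more careful than the paper's displayed intermediate expression, whose diagonal term carries a spurious factor $2$.
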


\begin{proof}
The vertex decomposition follows by collecting the diagonal and off-diagonal
contributions of \(R_T\) at each vertex:
\[
\langle f,R_Tf\rangle
=
\sum_{v\in V(T)}
\frac{1}{d_v}
\left(
2\sum_{\substack{e<e'\\ e,e'\ni v}}f_ef_{e'}
-2\sum_{e\ni v}f_e^2
\right)
=
\sum_{v\in V(T)}\frac{S_v^2-2A_v}{d_v}.
\]
For \(v\neq r\), the weighted Cauchy inequality gives
\[
S_v^2
\le
(1+\alpha_v)f_{e_p^{(v)}}^2
+
\left(1+\frac1{\alpha_v}\right)(d_v-1)\sum_{e\in E_c^{(v)}}f_e^2,
\]
and substituting this into \((S_v^2-2A_v)/d_v\) yields the stated bound with
coefficients \(A(d_v,\alpha_v)\) and \(B(d_v,\alpha_v)\). At the root,
\(S_r^2\le d_rA_r\) gives the root estimate. Regrouping the coefficients
edge by edge proves the edgewise bound.

For the linear choice \(\alpha_d=Kd+1\),
\[
B(d,Kd+1)=K,
\]
and
\[
A(d,Kd+1)-\frac{d-2}{d}
=
\frac{(1-K)d-2}{d(Kd+1)}.
\]
If \(d\ge 3\) and \(0<K<1/3\), then the denominator is positive and
\[
(1-K)d-2\ge 3(1-K)-2=1-3K>0.
\]
Hence
\[
\frac{d-2}{d}+K<C(d,K)
\qquad (d\ge 3),
\]
so every root-edge coefficient is strictly smaller than the internal-edge
model with the same initial degree. The final estimate follows.
\end{proof}

Next, we need the following lemma. 
\begin{lemma}
\label{lem:coefficient-optimization}
Let
\[
C(d,K):=
\frac{\left(1+\frac{1}{Kd+1}\right)(d-1)-2}{d}+K,
\qquad d\ge 1.
\]
For an integer \(m\) with \(3\le m\le 19\), set
\[
K_m:=\frac{\sqrt{m-1}-1}{m}.
\]
Then the following hold.

\begin{enumerate}
\item If \(3\le m\le 18\), then \(C(d,K_m)\) is strictly increasing on
\([1,m]\). Hence
\[
C(d,K_m)\le C(m,K_m)
\qquad (1\le d\le m).
\]

\item If \(m=19\), then
\[
C(d,K_{19})\le C(19,K_{19})
\qquad (d\in\mathbb Z_{\ge 1}).
\]
\end{enumerate}

Moreover, for every \(3\le m\le 19\),
\[
C(m,K_m)
=
1-\frac{4}{m}
+
\frac{2\sqrt{m-1}}{m}.
\]
\end{lemma}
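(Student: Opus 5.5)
The plan is to put $C(d,K)$ in a closed form where both the identity and the monotonicity in $d$ become transparent. Expanding and using the partial-fraction decomposition
\[
\frac{d-1}{d(Kd+1)}=-\frac1d+\frac{1+K}{Kd+1},
\]
we get
\[
C(d,K)=\frac{d-3}{d}+\frac{d-1}{d(Kd+1)}+K=1+K-\frac4d+\frac{1+K}{Kd+1}.
\]

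\emph{The identity.} Write $s:=\sqrt{m-1}$, so $m=s^2+1$. For $K=K_m$ we have $K_m m+1=s$. Substituting $d=m$ into the first form gives $\frac{m-1}{m\,s}=\frac{s}{m}$. Hence
\[
C(m,K_m)=1-\frac3m+\frac sm+\frac{s-1}m=1-\frac4m+\frac{2\sqrt{m-1}}m,
\]
which is the stated value.

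\emph{Shape of $d\mapsto C(d,K)$.} Differentiating the closed form,
\[
\partial_d C(d,K)=\frac4{d^2}-\frac{K(1+K)}{(Kd+1)^2}.
\]
This is positive iff $2(Kd+1)>d\sqrt{K(1+K)}$. For $0<K<1/3$ (note $K_m\le K_{19}<1/3$) we have $\sqrt{K(1+K)}>2K$, so the condition reads $d<d^*(K):=2/(\sqrt{K(1+K)}-2K)$. Thus $C(\cdot,K)$ is strictly increasing on $[1,d^*]$ and strictly decreasing on $[d^*,\infty)$, i.e. unimodal.

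\emph{Locating $d^*$.} It remains to decide when $d^*(K_m)\ge m$, i.e. when $2s\ge m\sqrt{K_m(1+K_m)}$, equivalently $4s^2\ge m^2K_m(1+K_m)$. Since $m K_m=s-1$ and $m(1+K_m)=s^2+s$, the right side equals $(s-1)\,s(s+1)=s(s^2-1)$. The condition therefore becomes $s^2-4s-1\le 0$, i.e. $s\le 2+\sqrt5$, i.e. $m\le (2+\sqrt5)^2+1\approx 18.94$.
\begin{itemize}
\item For $3\le m\le 18$ this holds strictly, so $m<d^*$. Hence $C(\cdot,K_m)$ is strictly increasing on $[1,m]$, which proves part (1).
\item For $m=19$ it fails, so $d^*(K_{19})<19$. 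By unimodality, $C(d,K_{19})\le C(19,K_{19})$ is automatic for all integers $d\ge 19$.
\end{itemize}

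The main obstacle is the remaining integers $d\le 18$ in the case $m=19$. There the maximizer $d^*$ is a non-integer between $18$ and $19$, so unimodality alone does not settle the comparison. Since $C$ increases up to $d^*$, every integer $d\le 18$ satisfies $C(d,K_{19})\le C(18,K_{19})$, so it suffices to check the single inequality $C(18,K_{19})\le C(19,K_{19})$. I would do this by direct evaluation with $K_{19}=(3\sqrt2-1)/19$, using the closed form: the difference reduces to
\[
-\frac4{19}+\frac4{18}+(1+K)\Bigl(\frac1{19K+1}-\frac1{18K+1}\Bigr),
\]
and showing this is nonnegative is a one-variable check involving $\sqrt2$, to be verified carefully by hand or rigorous numerics.
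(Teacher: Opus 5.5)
Your proposal is correct and follows essentially the paper's route. Your derivative $4/d^2-K(1+K)/(Kd+1)^2$ is the paper's $N_m(d)/\bigl(d^2(Kd+1)^2\bigr)$ factored as a difference of squares. Your test at $d=m$ gives the same condition $s^2-4s-1\le 0$ as the paper's $N_m(m)=t(-t^2+4t+1)>0$. For $m=19$ both arguments reduce to unimodality plus the single comparison $C(18,K_{19})\le C(19,K_{19})$, which the paper also only asserts.

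Two small fixes. First, your parenthetical $K_m\le K_{19}$ is false (e.g.\ $K_6\approx 0.206>K_{19}\approx 0.171$), but $K_m<1/3$ holds directly because it is equivalent to $s^2-3s+4>0$. Second, the final check reduces exactly to $151\cdot 3\sqrt2<648$, which is true; by unimodality this also gives $d^*(K_{19})>18$, so that fact need not be asserted separately.
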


\begin{proof}
First note that
\[
K_m \times m+1=\sqrt{m-1}.
\]
Therefore
\[
C(m,K_m)
=
\frac{
\left(1+\frac{1}{\sqrt{m-1}}\right)(m-1)-2
}{m}
+
\frac{\sqrt{m-1}-1}{m}.
\]
Simplifying gives
\[
C(m,K_m)
=
1-\frac{4}{m}
+
\frac{2\sqrt{m-1}}{m}.
\]

Next, for fixed \(K\), direct differentiation gives
\[
\frac{\partial C}{\partial d}
=
\frac{
K(3K-1)d^2+8Kd+4
}{
d^2(Kd+1)^2
}.
\tag{1}
\]
For \(3\le m\le 19\), one has
\[
0<K_m<\frac13.
\]
Indeed, \(K_m>0\) is clear, and \(K_m<1/3\) is equivalent to
\[
3\sqrt{m-1}<m+3.
\]
Writing \(t=\sqrt{m-1}\), this becomes
\[
3t<t^2+4,
\]
which follows from \(t^2-3t+4>0\).

Now assume \(3\le m\le 18\). Let
\[
N_m(d):=K_m(3K_m-1)d^2+8K_md+4.
\]
Since \(0<K_m<1/3\), \(N_m\) is a concave quadratic polynomial, and
\(N_m(0)=4>0\). Put \(t=\sqrt{m-1}\). Then
\[
m=t^2+1,
\qquad
K_m=\frac{t-1}{t^2+1}.
\]
A direct substitution gives
\[
N_m(m)=t(-t^2+4t+1).
\]
Since \(m\le 18\), we have
\[
t=\sqrt{m-1}\le \sqrt{17}<2+\sqrt5.
\]
Thus
\[
-t^2+4t+1>0,
\]
so \(N_m(m)>0\). By concavity and \(N_m(0)>0\), it follows that
\(N_m(d)>0\) for all \(d\in[1,m]\). Hence, by (1),
\[
\frac{\partial C}{\partial d}>0
\qquad (1\le d\le m),
\]
and \(C(d,K_m)\) is strictly increasing on \([1,m]\).

It remains to consider \(m=19\). For \(K_{19}=(3\sqrt2-1)/19\), the
numerator \(N_{19}\) in (1) is concave and has its unique positive zero
in \((18,19)\). Indeed,
\[
N_{19}(18)>0,
\qquad
N_{19}(19)<0.
\]
Hence \(C(d,K_{19})\) is increasing before this critical point and
decreasing after it. Therefore the maximum over integer \(d\ge1\) is
attained at either \(d=18\) or \(d=19\). Finally,
\[
C(19,K_{19})>C(18,K_{19}).
\]
Consequently,
\[
C(d,K_{19})\le C(19,K_{19})
\qquad (d\in\mathbb Z_{\ge1}).
\]
The proof is complete.
\end{proof}
\begin{comment}
\begin{theorem}\label{thm:degree-bound}
Let $T=(V,E)$ be a finite discrete Einstein tree, let
\[
\mathcal D:=\max_{v\in V} d_v.
\]
 Then the maximal eigenvalue satisfies
\[
\lambda_{\max}(R_T)\le
\begin{cases}
-2, & \mathcal D=1,\\[4pt]
1-\dfrac{4}{\mathcal D}+\dfrac{2\sqrt{\mathcal D-1}}{\mathcal D}, & 2\le \mathcal D\le 18,\\[8pt]
\dfrac{15+6\sqrt2}{19}, & \mathcal D\ge 19.
\end{cases}
\]
\end{theorem}

\end{comment}

\begin{proof}[Proof of Theorem~\ref{thm:degree-bound}]
We first handle the degenerate cases. If $\mathcal D=1$, then $T$ is a single edge with $R_T=(-2)$,
so $\lambda_{\max}=-2$.
If $\mathcal D=2$, then $T$ is a path; by
Example~\ref{prop:path-eigenpair}, $\lambda_{\max}(R_T)\le 0$,
and the formula gives $1-4/2+2\sqrt1/2=0$.

Now assume \(\mathcal D\ge 3\). Let \(f\) be any unit vector in
\(\mathbb R^{E(T)}\). By Proposition~\ref{prop:edgewise-reduction}, for
every \(K\in(0,1/3)\),
\[
\langle f,R_Tf\rangle\le \max_{1\le d\le \mathcal D,\ d\in\mathbb Z} C(d,K),
\]
where
\[
C(d,K)=\frac{\left(1+\frac{1}{Kd+1}\right)(d-1)-2}{d}+K.
\]
Set
\[
m:=\min\{\mathcal D,19\},
\qquad
K:=K_m=\frac{\sqrt{m-1}-1}{m}.
\]
By Lemma~\ref{lem:coefficient-optimization}, every integer degree
\(d\le \mathcal D\) satisfies
\[
C(d,K)\le C(m,K_m)
=
1-\frac{4}{m}
+
\frac{2\sqrt{m-1}}{m}.
\]
Indeed, if \(3\le \mathcal D\le 18\), then \(m=\mathcal D\), and the
lemma applies on \([1,\mathcal D]\). If \(\mathcal D\ge 19\), then
\(m=19\), and the lemma gives the bound for all integers \(d\ge 1\).

Therefore
\[
\langle f,R_Tf\rangle
\le C(m,K_m).
\]
Since \(f\) was arbitrary, the Rayleigh--Ritz principle yields
\[
\lambda_{\max}(R_T)\le C(m,K_m).
\]

If \(3\le \mathcal D\le 18\), then \(m=\mathcal D\), and hence
\[
\lambda_{\max}(R_T)
\le
1-\frac{4}{\mathcal D}
+
\frac{2\sqrt{\mathcal D-1}}{\mathcal D}.
\]
If \(\mathcal D\ge 19\), then \(m=19\), and hence
\[
\lambda_{\max}(R_T)
\le
1-\frac{4}{19}
+
\frac{2\sqrt{18}}{19}
=
\frac{15+6\sqrt2}{19}.
\]

\end{proof}
\begin{remark}
Let
\[
F(x):=1-\frac{4}{x}+\frac{2\sqrt{x-1}}{x}\qquad (x\ge 2).
\]
If $x$ is viewed as a continuous variable, then $F$ attains its maximum at
\[
x_\ast=10+4\sqrt5\approx 18.944,
\]
and
\[
F(x_\ast)=\sqrt5-1.
\]
However, the maximum degree $\mathcal D$ of a tree is an integer. Hence the discrete maximum becomes
\[
\max_{\mathcal D\in\mathbb Z_{\ge 2}}F(\mathcal D)=F(19)=\frac{15+6\sqrt2}{19}<\sqrt5-1.
\]
\end{remark}

\begin{proposition}
Let $T$ be a finite discrete Einstein tree with maximum degree $\mathcal D.$
Then
\[
\lambda_{\max}(R_T)\ge -\frac{2}{\mathcal D}
=\lambda_{\max}(R_{S_{\mathcal D}}).
\]
\end{proposition}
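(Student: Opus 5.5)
The plan is a Rayleigh-quotient argument with a test vector localized at a vertex of maximum degree. Let $v\in V(T)$ be a vertex with $d_v=\mathcal D$, and let $f\in\mathbb R^{E(T)}$ be the indicator vector of the $\mathcal D$ edges incident to $v$, so $\|f\|^2=\mathcal D$. Since $R_T$ is symmetric, the variational principle gives
\[
\lambda_{\max}(R_T)\ \ge\ \frac{\langle f,R_Tf\rangle}{\langle f,f\rangle}.
\]
The claim therefore reduces to the bound $\langle f,R_Tf\rangle\ge -2$. The Einstein hypothesis is not needed; the argument applies to every finite tree.

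To compute the quadratic form, I will use the vertex decomposition of Proposition~\ref{prop:edgewise-reduction}(1):
\[
\langle f,R_Tf\rangle=\sum_{u\in V(T)}\frac{S_u^2-2A_u}{d_u},
\]
where $S_u$ and $A_u$ are formed from $f$. This identity is a purely algebraic rearrangement of the entries of $R_T$ and does not use the hypothesis $\mathcal D\ge3$ or the choice of root stated there. Only the following vertices contribute.
\begin{itemize}
\item At $v$: $S_v=\mathcal D$ and $A_v=\mathcal D$, so the term is $(\mathcal D^2-2\mathcal D)/\mathcal D=\mathcal D-2$.
\item At each neighbor $u_i$ of $v$ ($i=1,\dots,\mathcal D$): exactly one incident edge carries $f=1$, so $S_{u_i}=A_{u_i}=1$ and the term is $-1/d_{u_i}$.
\item At every other vertex: no incident edge is in the support of $f$, because $T$ is a tree, so the term is $0$.
\end{itemize}
Hence
\[
\langle f,R_Tf\rangle=\mathcal D-2-\sum_{i=1}^{\mathcal D}\frac1{d_{u_i}}\ \ge\ \mathcal D-2-\mathcal D=-2,
\]
using $d_{u_i}\ge1$. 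Dividing by $\|f\|^2=\mathcal D$ gives $\lambda_{\max}(R_T)\ge -2/\mathcal D$.

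It remains to identify the right-hand side with the star. By Example~\ref{prop:star-eigenpair}, the star with $\mathcal D$ edges, i.e.\ $S_{\mathcal D+1}$ in the vertex-count notation used there, has Perron eigenvalue $-2/\mathcal D$. The statement's notation $S_{\mathcal D}$ should be read as this star with $\mathcal D$ edges. The degenerate case $\mathcal D=1$ (a single edge, $R_T=(-2)$) is covered by the same computation, and there equality holds.

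There is no substantial obstacle; the only care needed is the bookkeeping of the vertex-decomposition terms at the neighbors of $v$. I expect to add, as a remark, that equality in the Rayleigh bound forces $d_{u_i}=1$ for all $i$, i.e.\ $T$ is the star. Strictness for non-stars then follows because equality in the Rayleigh quotient would make $f$ an eigenvector. By Theorem~\ref{thm:perron_structure}(2) this nonnegative eigenvector would have to be the Perron vector, which is entrywise positive, and that is impossible unless every edge is incident to $v$, i.e.\ $T$ is a star.
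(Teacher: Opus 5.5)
Your proof is correct and is essentially the paper's argument: the paper uses the same test vector (normalized to $g_e=1/\sqrt{\mathcal D}$ on the edges at a maximum-degree vertex), the same vertex decomposition of the quadratic form, and the same bound $d_{u_i}\ge 1$ at the neighbors. Your remarks are also accurate: the Einstein hypothesis is superfluous, and $S_{\mathcal D}$ must be read as the star with $\mathcal D$ edges, i.e.\ $S_{\mathcal D+1}$ in the vertex-count notation of Example~\ref{prop:star-eigenpair}.
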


\begin{proof}
Choose a vertex $v$ of degree $\mathcal D$, and let $E_v=\{e_1,\dots,e_{\mathcal D}\}$ be the set of incident edges. Define a test vector $g\in\mathbb R^{|E|}$ by
\[
g_e=
\begin{cases}
\dfrac{1}{\sqrt{\mathcal D}},& e\in E_v,\\[4pt]
0,& e\notin E_v.
\end{cases}
\]
Then $\|g\|_2=1$. Write the neighbors of $v$ as $u_1,\dots,u_{\mathcal D}$, where $e_i=\{v,u_i\}$. Using the vertex form of the quadratic form,
\[
\langle g,R_Tg\rangle=\sum_{x\in V}\frac{S_x(g)^2-2A_x(g)}{d_x},
\]
the only nonzero contributions come from $v$ and its neighbors $u_1,\dots,u_{\mathcal D}$. They are
\[
\frac{S_v(g)^2-2A_v(g)}{d_v}
=\frac{\mathcal D-2}{\mathcal D},
\qquad
\frac{S_{u_i}(g)^2-2A_{u_i}(g)}{d_{u_i}}
=-\frac{1}{\mathcal D\, d_{u_i}}.
\]
Thus
\[
\langle g,R_Tg\rangle
=\frac{\mathcal D-2}{\mathcal D}-\frac{1}{\mathcal D}\sum_{i=1}^{\mathcal D}\frac{1}{d_{u_i}}
\ge \frac{\mathcal D-2}{\mathcal D}-1
=-\frac{2}{\mathcal D},
\]
since $d_{u_i}\ge 1$ for all $i$. 
By the Rayleigh--Ritz principle,
\[
\lambda_{\max}(R_T)\ge \langle g,R_Tg\rangle\ge -\frac{2}{\mathcal D}.
\]
\qedhere
\end{proof}

\begin{corollary}
Let $T$ be a finite discrete Einstein tree with maximum degree $\mathcal D$. Then
\[
-\frac{2}{\mathcal D}\le \lambda_{\max}(R_T)\le
\begin{cases}
-2, & \mathcal D = 1,\\[4pt]
1-\dfrac{4}{\mathcal D}+\dfrac{2\sqrt{\mathcal D-1}}{\mathcal D}, & 2\le \mathcal D\le 18,\\[8pt]
\dfrac{15+6\sqrt2}{19}, & \mathcal D\ge 19.
\end{cases}
\]
In particular, for fixed maximum degree $\mathcal D$, the maximal eigenvalue is trapped between the star benchmark and the degree-dependent upper bound.
\end{corollary}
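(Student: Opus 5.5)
The corollary is the conjunction of two results already established, so the plan is to invoke each one separately and then check that the resulting interval is nonempty. First, by Theorem~\ref{thm:perron_structure}, every finite tree admits a discrete Einstein metric (the Perron eigenvector), and its curvature is $-\lambda_{\max}(R_T)$. So the hypothesis ``discrete Einstein tree'' imposes no extra restriction, and both bounds below concern only $\lambda_{\max}(R_T)$.

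For the lower bound I will invoke the preceding proposition directly. It uses the normalized indicator test vector on the $\mathcal D$ edges at a vertex of maximal degree. Via the vertex decomposition of Proposition~\ref{prop:edgewise-reduction}(1) and the Rayleigh--Ritz principle, it gives $\lambda_{\max}(R_T)\ge -2/\mathcal D=\lambda_{\max}(R_{S_{\mathcal D}})$, in agreement with Example~\ref{prop:star-eigenpair}. That argument only uses $d_{u_i}\ge 1$, so it is valid for every $\mathcal D\ge 1$.

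For the upper bound I will quote Theorem~\ref{thm:degree-bound}, whose three cases are exactly the three cases displayed in the corollary. Its proof handles $\mathcal D=1$ and $\mathcal D=2$ directly (single edge, and the path via Example~\ref{prop:path-eigenpair}), and handles $\mathcal D\ge 3$ through Proposition~\ref{prop:edgewise-reduction} together with Lemma~\ref{lem:coefficient-optimization}.

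The only remaining step, and the sole point to verify, is that the two bounds are compatible.
\begin{itemize}
\item For $\mathcal D=1$ both sides equal $-2$, since $T$ is a single edge and $R_T=(-2)$.
\item For $\mathcal D=2$ the interval is $[-1,0]$.
\item For $\mathcal D\ge 3$ the upper bound $1-\frac{4}{\mathcal D}+\frac{2\sqrt{\mathcal D-1}}{\mathcal D}$ exceeds $-\frac{2}{\mathcal D}$, because this is equivalent to $\mathcal D-2+2\sqrt{\mathcal D-1}>0$.
\item For $\mathcal D\ge 19$ the constant $\frac{15+6\sqrt2}{19}$ is positive, so it also exceeds $-\frac{2}{\mathcal D}$.
\end{itemize}
There is no real obstacle here; the substantive work lies in the previously proven theorem and proposition.
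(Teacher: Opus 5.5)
Your proposal is correct and matches how the paper obtains the corollary. The paper states it without a separate proof, as the conjunction of the immediately preceding proposition (the normalized indicator of the edges at a maximum-degree vertex, fed into the vertex decomposition and Rayleigh--Ritz) and Theorem~\ref{thm:degree-bound}; your nonemptiness checks are correct extras. One cosmetic point: in the convention of Example~\ref{prop:star-eigenpair}, where $S_n$ has $n$ vertices, the star with $\mathcal D$ edges is $S_{\mathcal D+1}$, so the benchmark identity should read $-2/\mathcal D=\lambda_{\max}(R_{S_{\mathcal D+1}})$; you inherited this indexing slip from the paper's statement of the proposition.
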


\section{Monotonicity of the Perron Eigenvalue and a Topological Classification of Trees}\label{sec:eigenvalueinc}

Next, we study how $\lambda_{\max}(R_T)$ changes under attachment.
Proposition~\ref{pro:attachment} gives a single bound $\Lambda(d,k)$ on $\lambda_{\max}(R_T)$
below which attaching a forest of $k$ trees at a degree-$d$ vertex cannot decrease the
eigenvalue; in particular it never decreases for $d\le 2$, and always strictly increases
when $\lambda_{\max}(R_T)\le 0$.

\begin{proposition}\label{pro:attachment}
Let $T$ be a tree, $v\in V(T)$ a vertex of degree $d$, and let $T'$ be obtained from $T$
by  attaching a forest of $k\ge 1$ trees to $v$ via $k$ new edges $e_1,\dots,e_k$,
where $e_j$ joins $v$ to a vertex $u_j$ of the forest. Set
$\lambda:=\lambda_{\max}(R_T)$ and
\[
\Lambda(d,k):=\begin{cases}+\infty,& d\le 2,\\[3pt]\dfrac{4}{(d+k)(d-2)},& d\ge 3.\end{cases}
\]
If $\lambda\le\Lambda(d,k)$, then $\lambda_{\max}(R_{T'})\ge\lambda$, with strict
inequality whenever $\lambda<\Lambda(d,k)$. In particular:
\begin{enumerate}
\item[(a)] if $d\le 2$, the conclusion holds for every $\lambda$ and every $k$;
\item[(b)] if $\lambda\le 0$, the eigenvalue strictly increases, for every $d$ and $k$;
\item[(c)] if $d=3$, it holds for all $0\le\lambda\le\frac{4}{3+k}$, in particular for $\lambda=0$.
\end{enumerate}
\end{proposition}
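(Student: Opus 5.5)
The plan is to compare Rayleigh quotients, using the vertex decomposition of Proposition~\ref{prop:edgewise-reduction}(1),
\[
\langle f,R_Tf\rangle=\sum_{x}\frac{S_x^2-2A_x}{d_x}.
\]
Let $w>0$ be the Perron vector of $R_T$ from Theorem~\ref{thm:perron_structure}. Define a test vector $f\in\mathbb R^{E(T')}$ by $f_e=w_e$ on $E(T)$, $f_{e_j}=\varepsilon\ge 0$ on the $k$ new edges, and $f=0$ on the edges of the attached forest.

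\textbf{Step 1 (effect of the zero extension).} In the decomposition, every vertex of $T$ other than $v$ keeps its degree and its incident values of $f$, so its term is unchanged. Each forest vertex $u_j$ sees only the value $\varepsilon$, so it contributes $-\varepsilon^2/d_{u_j}(T')$. At $v$ the degree becomes $d+k$, with $S_v(f)=S_v+k\varepsilon$ and $A_v(f)=A_v+k\varepsilon^2$. Put $D:=S_v^2-2A_v$, computed from $w$. Expanding gives
\[
\langle f,R_{T'}f\rangle=\lambda\|w\|^2-\Big(\frac1d-\frac1{d+k}\Big)D+\frac{2k\varepsilon S_v}{d+k}+\varepsilon^2 Q,
\qquad \|f\|^2=\|w\|^2+k\varepsilon^2,
\]
where $Q$ is an explicit quantity bounded below by $\frac{k^2-2k}{d+k}-k$.

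\textbf{Step 2 (the easy cases).} If $d\le 2$, then $D\le 0$: for $d=1$ we have $D=-w_e^2$, and for $d=2$ we have $D=-(w_1-w_2)^2$. Taking $\varepsilon=0$ already gives $\lambda_{\max}(R_{T'})\ge\lambda$. The inequality is strict because $f$ is not an eigenvector of $R_{T'}$. Indeed, the row of $e_j$ evaluated at $f$ equals $S_v/(d+k)>0$, while $f_{e_j}=0$. Hence the Rayleigh quotient at $f$ is strictly less than $\lambda_{\max}(R_{T'})$. The same strictness argument is used in every case, so the real work is to show
\[
\langle f,R_{T'}f\rangle\ge\lambda\|f\|^2
\]
for some $\varepsilon\ge0$.

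\textbf{Step 3 (the case $d\ge3$, main obstacle).} Here $D$ can be positive; by Cauchy--Schwarz, $D\le(1-2/d)S_v^2$. The plan is to pay for the loss $\frac{k}{d(d+k)}D$ with the first-order gain $\frac{2k\varepsilon S_v}{d+k}$. It is then enough to find $\varepsilon>0$ satisfying
\[
\frac{2k\varepsilon S_v}{d+k}+\varepsilon^2\big(Q-k\lambda\big)\ \ge\ \frac{k(d-2)}{d(d+k)}S_v^2 .
\]
I would choose $\varepsilon=cS_v$ and optimize the constant $c$. The condition $\lambda\le\Lambda(d,k)=\frac{4}{(d+k)(d-2)}$ should be exactly the discriminant condition for this quadratic in $c$. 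The fraction $4/(d-2)$ has the form of a ratio (first-order gain)$^2$ over (loss) $\times$ (second-order penalty), which supports this.

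Making the constants close is the step I expect to be hardest. The crude bound on $Q$ through $d_{u_j}\ge1$ may be too lossy. If it is, I would refine $D$ using the eigen-equations at the edges $e=vy$. From $S_y\ge w_e$ one gets $\bigl(\lambda+1+\tfrac2d\bigr)w_e\ge S_v/d$, which yields a lower bound on $A_v$ in terms of $S_v^2$ and $\lambda$. This sharpens the loss term by a $\lambda$-dependent factor.

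\textbf{Step 4 (consequences).} Case (a) is Step 2. For (b), note that $\lambda\le0<\Lambda(d,k)$ for every $d$ and $k$, so the inequality is strict. For (c), put $d=3$: then $\Lambda(3,k)=\frac{4}{3+k}$, so the conclusion holds for all $0\le\lambda\le\frac{4}{3+k}$, in particular for $\lambda=0$.
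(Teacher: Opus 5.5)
Your construction is the paper's: the zero extension on the forest, a common weight $\varepsilon$ (the paper's $y$) on the new edges, the vertex decomposition, the worst case $d_{u_j}=1$, and $A_v\ge S_v^2/d$ at $v$. Your treatment of $d\le 2$ is correct and a valid shortcut compared with the paper's uniform optimization in $y$: you take $\varepsilon=0$, use $D\le 0$, and get strictness from $(R_{T'}f)_{e_j}=S_v/(d+k)>0=f_{e_j}$.

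The gap is Step 3, which is the whole content of the statement for $d\ge 3$. You leave it as a conjecture, and the inequality you display is off by a factor of $d$. The loss is $\frac{k}{d(d+k)}D\le\frac{k(d-2)}{d^{2}(d+k)}S_v^2$, not $\frac{k(d-2)}{d(d+k)}S_v^2$. With your right-hand side the discriminant condition becomes $\lambda\le\frac{4+d-d^2}{(d-2)(d+k)}<0$. That would not even give (b) or (c), and it would push you toward the unnecessary refinement via the eigen-equations. With the correct factor the crude bounds are already sharp. Put $C_k:=\frac{d+2}{d+k}+\lambda$. Your bound on $Q$ gives $Q-k\lambda\ge -kC_k$, so with $\varepsilon=cS_v$ it suffices to find $c>0$ with
\[
\frac{2c}{d+k}-C_k c^2\ \ge\ \frac{d-2}{d^2(d+k)}.
\]
If $C_k\le 0$, take $c$ large. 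Otherwise the left side is maximized at $c=\frac{1}{(d+k)C_k}$, with value $\frac{1}{(d+k)^2C_k}$. The inequality then holds iff $(d-2)(d+k)C_k\le d^2$, i.e.\ iff $\lambda\le\frac{4}{(d+k)(d-2)}=\Lambda(d,k)$, and strictly iff $\lambda<\Lambda(d,k)$. This is exactly the paper's computation.

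Your remark that ``the same strictness argument is used in every case'' also fails once $\varepsilon>0$. The witness $f_{e_j}=0$ is gone, and when all $u_j$ are leaves, $f$ is strictly positive. In fact, suppose all $u_j$ are leaves, all $w_{f_i}$ at $v$ are equal, $\varepsilon=\frac{(d-2)S_v}{d^2}$, and $\lambda=\Lambda(d,k)$. A direct check then gives $R_{T'}f=\lambda f$. So $f$ is the Perron vector of $R_{T'}$ and $\lambda_{\max}(R_{T'})=\lambda$, which is why the statement claims only $\ge$ at the threshold. For $d\ge 3$, strictness must come from the strict version of the inequality above when $\lambda<\Lambda(d,k)$, not from a non-eigenvector argument.
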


\begin{proof}
Let $w>0$ be the Perron eigenvector, $\|w\|=1$, $R_Tw=\lambda w$. Let $f_1,\dots,f_d$ be
the edges incident to $v$ and put $S=\sum_{i=1}^d w_{f_i}>0$, $A=\sum_{i=1}^d w_{f_i}^2>0$,
so $0<S^2\le dA$ by Cauchy--Schwarz.  Extend $w$ to $\tilde w$ on $E(T')$ by giving weight
$y>0$ to each of the $k$ new edges, $w_f$ to each $f\in E(T)$, and $0$ to every edge inside
the attached forest; then $\|\tilde w\|^2=1+ky^2$. 

In the vertex decomposition $\langle g,R_{T} g\rangle=\sum_z(S_z^2-2A_z)/d_z$
(cf.\ Proposition~\ref{prop:edgewise-reduction}), passing from $T$ to $T'$ changes only the
term at $v$ (degree $d\to d+k$) and adds one term per new endpoint $u_j$. As $u_j$ has degree $d_{u_j}\ge 1$ and
carries only the edge weight $y$, its term is $-y^2/d_{u_j}\ge -y^2$, with equality iff
$d_{u_j}=1$. Hence $\tilde w^\top R_{T'}\tilde w$
is smallest when every $u_j$ is a leaf, and it suffices to treat that case, where
\[
\tilde w^\top R_{T'}\tilde w=\lambda+k\!\left[\frac{2A-S^2}{d(d+k)}+\frac{2S}{d+k}\,y
-\frac{d+2}{d+k}\,y^2\right].
\]
By the variational principle $\lambda_{\max}(R_{T'})\ge\tilde w^\top R_{T'}\tilde w/(1+ky^2)$,
and this is $\ge\lambda$ (resp.\ $>\lambda$) as soon as
\[
g(y):=\frac{2A-S^2}{d(d+k)}+\frac{2S}{d+k}\,y-C_k\,y^2\ \ge\ 0\ (\text{resp.\ }>0)
\quad\text{for some }y>0,\qquad C_k:=\frac{d+2}{d+k}+\lambda.
\]
If $C_k\le0$, then $-C_k\ge0$ and the linear coefficient $\frac{2S}{d+k}>0$, so $g(y)>0$
for large $y$. If $C_k>0$, $g$ is concave with
\[
\max_y g=\frac{2A-S^2}{d(d+k)}+\frac{S^2}{(d+k)^2C_k}
=\frac{2(d+k)C_kA+S^2\bigl(d-(d+k)C_k\bigr)}{d(d+k)^2C_k}.
\]
If $d-(d+k)C_k\ge0$ the numerator is $\ge 2(d+k)C_kA>0$; otherwise it is decreasing in
$S^2$, so by $S^2\le dA$ it is at least $A\bigl[d^2-(d-2)(d+k)C_k\bigr]$. For $d\le2$ this
bracket is $\ge d^2>0$; for $d\ge3$ it is $\ge0$ exactly when
$C_k\le\frac{d^2}{(d-2)(d+k)}$, i.e.\ $\lambda\le\frac{4}{(d+k)(d-2)}=\Lambda(d,k)$, strictly
when $\lambda<\Lambda(d,k)$. Hence $\max_y g\ge0$, strictly for $\lambda<\Lambda(d,k)$.
\end{proof}

\begin{remark}
For the adjacency or Laplacian matrix, attaching a leaf never decreases the spectral
radius. The Ricci matrix $R_T$ behaves differently: Proposition~\ref{pro:attachment}
guarantees that $\lambda_{\max}$ does not decrease as long as
$\lambda_{\max}(R_T)\le\Lambda(d,k)$, and by part~(b) it strictly increases whenever
$\lambda_{\max}(R_T)\le 0$, irrespective of the attachment vertex. Beyond this regime the
eigenvalue can genuinely drop: the appendix exhibits attachments at vertices of degree
$\ge 4$ (necessarily with $\lambda_{\max}(R_T)>0$) for which $\lambda_{\max}$ strictly
decreases.
\end{remark}

\begin{definition}[$S_3^2$ tree]
\label{def:S32}
The tree $S_3^2$ is the tree obtained from a star $S_3$,  a central vertex with three leaves,  by subdividing each edge once. Equivalently, $S_3^2$ consists of a central vertex $c$ connected to three vertices $a_1, a_2, a_3$, and each $a_i$ is connected to a leaf $b_i$. The tree has $7$ vertices and $6$ edges. 

$S_3^2$ is the minimal tree (with respect to edge deletion) such that deleting any edge yields a caterpillar tree.  By direct computation, $\lambda(S_3^2)=0$. 
\end{definition}

\begin{figure}[h]
\centering
\begin{tikzpicture}[
every node/.style={
    draw,
    circle,
    fill=black,
    inner sep=1.5pt
},
thick]

\node (c)  at (0,0)     [label=below:$c$] {};
\node (a1) at (-1.5,1.2)[label=left:$a_1$] {};
\node (a2) at (0,1.5)   [label=above:$a_2$] {};
\node (a3) at (1.5,1.2) [label=right:$a_3$] {};

\node (b1) at (-2.5,2)  [label=left:$b_1$] {};
\node (b2) at (0,2.5)   [label=above:$b_2$] {};
\node (b3) at (2.5,2)   [label=right:$b_3$] {};

\draw (c) -- (a1);
\draw (c) -- (a2);
\draw (c) -- (a3);
\draw (a1) -- (b1);
\draw (a2) -- (b2);
\draw (a3) -- (b3);

\end{tikzpicture}

\caption{The tree $S_3^2$: a central vertex $c$ connected to three paths of length $2$.}
\label{fig:S32}
\end{figure}
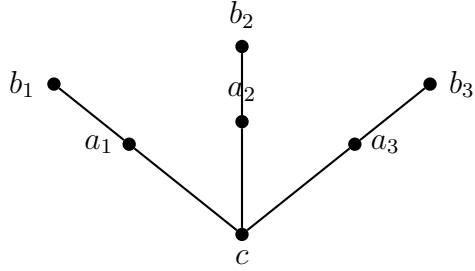

Now we are ready to prove the following main result.
\begin{proof}[Proof of Theorem~\ref{the:lambda-negative-caterpillar}]
Since $T$ is not a caterpillar, it contains a copy of $S_3^2$; fix one and let $c$ be its
center. We build $T$ up from this $S_3^2$, starting at $c$. While the tree is still the
bare $S_3^2$, attach to $c$ all subtrees of $T$ incident to $c$ apart from the three legs
of $S_3^2$; since $d_c=3$ and the current eigenvalue is $\lambda_{\max}(R_{S_3^2})=0$,
Proposition~\ref{pro:attachment}(c) shows that the resulting tree still satisfies
$\lambda_{\max}\ge 0$. The rest of $T$ is then assembled bottom-up, each subtree being
attached to its parent only once it is complete; in this order every attachment is made at
a vertex that still has degree at most $2$, and hence does not decrease $\lambda_{\max}$ by
Proposition~\ref{pro:attachment}(a). As $c$ is the only vertex ever treated at degree $3$,
the eigenvalue never decreases along the way, and therefore
$\lambda_T\ge\lambda_{\max}(R_{S_3^2})=0$, with equality only for $T=S_3^2$.
\end{proof}

\section{Monotone Decay of Edge Weights for $\lambda<0$}
\label{perro<0}

In this section we describe the structure of the Perron weights---equivalently, the
Einstein metric---in the positive-curvature regime $\lambda_{\max}(R_T)<0$
(i.e.\ $\kappa>0$).

By Theorem~\ref{theo:directional-decreasing}, the Perron weights strictly decrease along
any path running outward from a maximal-weight edge toward the leaves.
Theorem~\ref{coro:uniquemax} further restricts the maxima: a non-star tree has at most
two edges of maximal weight, and when there are two they share a vertex of degree two.
Both are proved directly, without using the caterpillar classification.

\begin{theorem}[Directional Decreasing Chain]
\label{theo:directional-decreasing}
Let $T$ be a tree with Perron eigenpair $(\lambda, w)$ where $\lambda < 0$. 
Suppose there exists a directed edge $u \xrightarrow{e} v$ such that $S_u < 2w_e$. 
Then for any neighbor $x$ of $u$ with $x \neq v$, the edge $f = \{u, x\}$ satisfies $w_f < w_e$. 

Moreover, along any path starting from $u$ in the direction away from $v$, the edge weights 
form a strictly decreasing sequence
\[
w_e > w_f > w_g > \cdots > w_{\ell}
\]
until reaching a leaf $\ell$, where $w_{\ell}$ is the weight of the leaf edge.
\end{theorem}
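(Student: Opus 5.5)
The plan is to rewrite the hypothesis $S_u<2w_e$ as a statement about the edges at $u$ other than $e$. Then I propagate an analogous condition along the path using the eigen-equation $R_Tw=\lambda w$ with $\lambda<0$, and conclude by induction on the length of the path.

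\emph{Step 1 (first claim).} Since $S_u=\sum_{y\sim u}w_{uy}=w_e+\sum_{y\sim u,\,y\neq v}w_{uy}$, the hypothesis $S_u<2w_e$ is equivalent to
\[
\sum_{y\sim u,\ y\neq v} w_{uy} < w_e .
\]
All Perron weights are strictly positive by Theorem~\ref{thm:perron_structure}(1). Hence every single summand is smaller than the sum, and so $w_f<w_e$ for each $f=\{u,x\}$ with $x\neq v$. No spectral information is needed here.

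\emph{Step 2 (transfer of the hypothesis).} I will show that the directed edge $x\xrightarrow{f}u$ again satisfies $S_x<2w_f$, so the same situation reappears one step further out. The row of $R_Tw=\lambda w$ indexed by $f=\{u,x\}$ reads
\[
\lambda w_f=\frac{S_u-2w_f}{d_u}+\frac{S_x-2w_f}{d_x}.
\]
The left side is negative because $\lambda<0$ and $w_f>0$. For the first term on the right, $S_u-2w_f\ge w_e+w_f-2w_f=w_e-w_f>0$ by Step 1, since $S_u$ contains both $w_e$ and $w_f$ plus nonnegative extra terms. Therefore $(S_x-2w_f)/d_x<0$, i.e.\ $S_x<2w_f$. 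Applying Step 1 at $x$ with $x\xrightarrow{f}u$ gives $w_g<w_f$ for every edge $g=\{x,z\}$ with $z\neq u$.

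\emph{Step 3 (induction).} Iterate along the path $u,x,z,\dots$ moving away from $v$. At each stage the pair (current edge, far endpoint) satisfies the strict inequality $S<2w$ by Step 2, so the next edge has strictly smaller weight by Step 1. The tree is finite and the path is simple, so the process terminates at a leaf $\ell$. At a leaf the condition $S_\ell=w_\ell<2w_\ell$ holds trivially and no further edge exists. This yields $w_e>w_f>w_g>\cdots>w_\ell$.

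I do not expect a serious obstacle. The only delicate point is Step 2: one has to see that the sign of $\lambda$ forces the second term on the right to be negative, and this needs the strict positivity $w_e-w_f>0$ from Step 1. This is where the strictness of the hypothesis $S_u<2w_e$ and the assumption $\lambda<0$ are both genuinely used.
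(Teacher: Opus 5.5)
Your proposal is correct and follows the paper's proof essentially step for step. The first claim comes from positivity of the other summands of $S_u$. Then the eigen-equation on $f$, with $\lambda<0$ and $S_u-2w_f\ge w_e-w_f>0$, transfers the condition to $S_x<2w_f$, and induction along the path carries it out to a leaf.
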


\begin{proof}
We first show that $w_f < w_e$ for any neighbor $x \neq v$. 
From $S_u < 2w_e$ and $S_u = w_e + \sum_{y \sim u, y \neq v} w_{uy}$, we have
\[
\sum_{y \sim u, y \neq v} w_{uy} < w_e.
\]
Hence $w_f = w_{ux} \le \sum_{y \sim u, y \neq v} w_{uy} < w_e$.

Next, we prove the inductive step. Since $w_e > w_f$ and $S_u \ge w_e + w_f$, we obtain
$S_u > 2w_f$. The eigenvalue equation for $f = \{u, x\}$ gives
\[
\frac{S_u - 2w_f}{d_u} + \frac{S_x - 2w_f}{d_x} = \lambda w_f < 0.
\]
The first term is positive, so the second must be negative, yielding $S_x < 2w_f$. 
Consequently, for any neighbor $z$ of $x$ with $z \neq u$, the edge $g = \{x, z\}$ satisfies
$w_g < w_f$ (by the same argument as the first step). Moreover, $S_x > 2w_g$, so the condition 
$S_x < 2w_f$ propagates to the next vertex.

By induction, the edge weights along any path starting from $u$ away from $v$ strictly decrease. 
Since the tree is finite, the sequence must terminate at a leaf.
\end{proof}

\begin{corollary}[Leaf Edge Minimality for $\lambda < 0$]
\label{cor:leaf-minimality}
Let $T$ be a tree with Perron eigenpair $(\lambda, w)$ where $\lambda < 0$.
The minimum weight is only attained in the leaf edges, i.e. there is a leaf edge $e_0$ such that 
$$w_{e_0}<w_f, \quad \forall\ \mathrm{internal\ edge}\ f.$$

\end{corollary}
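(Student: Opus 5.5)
We prove Corollary~\ref{cor:leaf-minimality} by comparing each internal edge with a leaf edge reached by walking away from it, using the eigen-equation at the internal edge together with Theorem~\ref{theo:directional-decreasing}. Fix an internal edge $f=\{x,y\}$, so that $d_x,d_y\ge 2$. The eigenvalue equation at $f$ reads
\[
\frac{S_x-2w_f}{d_x}+\frac{S_y-2w_f}{d_y}=\lambda w_f<0.
\]
Hence at least one of the two terms is negative. Say $S_x<2w_f$; this is exactly the hypothesis of Theorem~\ref{theo:directional-decreasing} for the directed edge $x\xrightarrow{f} y$, where we take $u=x$ and $v=y$.

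Since $d_x\ge 2$, the vertex $x$ has a neighbor $x'\ne y$. Theorem~\ref{theo:directional-decreasing} then states that along any path starting at $x$ through $x'$ and continuing away from $y$, the weights strictly decrease, beginning with $w_f$, until the path reaches a leaf. Such a path exists because the tree is finite: keep choosing a further neighbor until a leaf is hit. So there is a leaf edge $\ell(f)$ with $w_{\ell(f)}<w_f$. Note that the first step already gives $w_{xx'}<w_f$, so the inequality is strict even when $xx'$ is itself the leaf edge.

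Therefore every internal edge is strictly heavier than some leaf edge. Let $e_0$ be a leaf edge of minimal weight among all leaf edges. For any internal edge $f$ we get $w_{e_0}\le w_{\ell(f)}<w_f$, which is the claimed inequality. It follows that the global minimum of $w$ is attained only on leaf edges.

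The only cases needing care are degenerate ones. If $T$ has no internal edge, for instance a star or a single edge, the statement is vacuous. The sole potential obstacle is making sure the hypothesis $S_u<2w_e$ is available at one endpoint of $f$; the sign argument from $\lambda<0$ supplies this.
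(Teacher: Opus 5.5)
Your proof is correct and follows the paper's route: apply Theorem~\ref{theo:directional-decreasing} from each internal edge to descend strictly to a lighter leaf edge, then take the lightest leaf edge as $e_0$. Your version is more careful than the paper's one-line proof. It makes explicit two points the paper leaves implicit: the sign of the eigen-equation at $f$ (using $\lambda<0$) gives $S_x<2w_f$ at one endpoint, which is the hypothesis needed to invoke the theorem, and $d_x\ge 2$ guarantees a first step away from $y$.
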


\begin{proof}
Starting from any edge $e$, repeated application of Theorem~\ref{theo:directional-decreasing} along a maximal directed path produces a strictly decreasing sequence of edge weights terminating at a leaf edge. Hence every edge weight is bounded below by a leaf-edge weight, which shows that leaf edges attain the global minimum.
\end{proof}

We are ready to prove one of main results.
\begin{proof}[Proof of Theorem~\ref{coro:uniquemax}]
Let $e_0$ be a leaf edge minimizing $w_e$. Moving inward along the unique path away from the leaf, weights strictly increase by Theorem~\ref{theo:directional-decreasing}. Let $e_{\max} = \{x,y\}$ be an edge of maximal weight encountered along this path, with $x$ closer to the leaf. By construction, $S_x < 2w_{e_{\max}}$.

If $S_y < 2w_{e_{\max}}$, we are done. If $S_y = 2w_{e_{\max}}$, then $d_y = 2$ and the other edge $e' = \{y,z\}$ incident to $y$ must also have weight $w_{e_{\max}}$, and $S_z < 2w_{e'}$ (as $\lambda<0$). Applying Theorem~\ref{theo:directional-decreasing} to the edge $e'$, we obtain a strictly decreasing chain beyond $z$, so $e_{\max}$ and $e'$ are the only edges achieving the maximum weight, and they share the vertex $y$ of degree $2$.
\end{proof}

\begin{remark}
When $\lambda = 0$, the strict inequalities in Theorem~\ref{theo:directional-decreasing} 
degenerate into equalities. Consequently, edge weights may remain constant along degree-$2$ 
chains, as exemplified by the subdivided double-star trees $D_{3,3}^{(k)}$ (see 
Example~\ref{ex:double-star} and Proposition~\ref{pro:D33k}), where all internal edges carry the same weight. 
Thus $\lambda = 0$ marks the transition between strict monotone decay ($\lambda < 0$) 
and flat, subdivision-invariant configurations ($\lambda = 0$).
\end{remark}

\section{Leaf Weights and Extremal Edges}\label{sec:leafminmax}

We describe the Perron vector near leaves. Leaf edges at a common vertex have equal
weight, strictly below any incident internal edge (Corollary~\ref{cor:leaf-local}). The
global maximum always lies on an internal edge
(Proposition~\ref{pro:schrodingemaximalweight}); the global minimum lies on a leaf edge
for $\lambda\le 0$, but can lie on an internal edge for $\lambda>0$
(Proposition~\ref{pro:leaf-edge-min}).

\begin{corollary}[Local leaf monotonicity]
\label{cor:leaf-local}
Let $(\lambda, w)$ be the Perron eigenpair of $R_T$.

Then for any vertex $y$:
\begin{enumerate}
    \item All leaf edges incident to $y$ have equal weight.
    \item Any leaf edge incident to $y$ has strictly smaller weight than any internal edge incident to $y$.
\end{enumerate}
\end{corollary}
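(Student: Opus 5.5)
Both parts follow from the eigenvalue equation $R_T w=\lambda w$ applied to leaf edges, with the Perron property $w>0$ and $\lambda$ fixed. Let $e=\{y,\ell\}$ be a leaf edge with $d_\ell=1$. Then $S_\ell=w_e$, so the $\ell$-term in the row for $e$ is $(S_\ell-2w_e)/d_\ell=-w_e$. The equation at $e$ reads
\[
\frac{S_y-2w_e}{d_y}-w_e=\lambda w_e,
\qquad\text{i.e.}\qquad
S_y=\bigl(d_y(1+\lambda)+2\bigr)w_e .
\]
The left side $S_y$ and the coefficient $d_y(1+\lambda)+2$ depend only on $y$, not on the chosen leaf edge. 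If the coefficient vanished, we would get $S_y=0$, contradicting positivity. So every leaf edge at $y$ has weight $w_e=S_y/(d_y(1+\lambda)+2)$, which proves (1). As a by-product, the coefficient is positive because $S_y>0$ and $w_e>0$.

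For (2), take an internal edge $f=\{y,x\}$ at $y$ with $d_x\ge 2$. I would compare its equation
\[
\frac{S_y-2w_f}{d_y}+\frac{S_x-2w_f}{d_x}=\lambda w_f
\]
with the leaf equation. Substituting $S_y=(d_y(1+\lambda)+2)w_e$ turns the $f$-row into
\[
\frac{S_x-2w_f}{d_x}=\Bigl(1+\lambda+\frac{2}{d_y}\Bigr)(w_f-w_e),
\]
and this is the key identity. The factor $c:=1+\lambda+2/d_y$ equals $(d_y(1+\lambda)+2)/d_y>0$ by the previous paragraph. The claim $w_f>w_e$ is therefore equivalent to $S_x>2w_f$, i.e.\ $\sum_{z\sim x,\,z\ne y}w_{xz}>w_f$. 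Since $d_x\ge 2$ the sum is nonempty, but it need not dominate $w_f$. This local inequality is the main obstacle.

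To get past it, I would argue by contradiction and propagate outward from $y$, in the spirit of the proof of Theorem~\ref{theo:directional-decreasing}. Suppose $w_f\le w_e$. The identity gives $S_x\le 2w_f$, so $\sum_{z\ne y}w_{xz}\le w_f$. Then each further edge $g=\{x,z\}$ satisfies $w_g\le w_f$, strictly when $d_x\ge 3$. Moreover, by the row for $g$ together with $S_x\ge w_f+w_g\ge 2w_g$, the term $(S_x-2w_g)/d_x$ is nonnegative. I would aim to show this forces $S_z\le 2w_g$ (or the analogue with $\lambda$), so the deficit condition passes to the next vertex. Eventually the chain reaches a leaf edge $h=\{z',\ell'\}$ with $S_{z'}\le 2w_h$. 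The leaf relation at $z'$ gives $S_{z'}=(d_{z'}(1+\lambda)+2)w_h$, so $d_{z'}(1+\lambda)\le 0$, contradicting the positivity of the coefficient obtained in the first paragraph.

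The delicate point is the sign of $\lambda$ during propagation. When $\lambda\le 0$ the argument follows Theorem~\ref{theo:directional-decreasing} verbatim. When $\lambda>0$ I would instead compare a tentative extremal configuration via the quantity $(S_v-2w)$ with the factor $c>0$ at each step. If the propagation still breaks, the fallback is a Perron/Rayleigh perturbation: move weight from $e$ to $f$ and show that the Rayleigh quotient increases, contradicting the maximality of $w$.
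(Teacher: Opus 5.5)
Part (1) is correct and matches the paper's argument. Part (2), however, rests on an algebra slip. Write $c=1+\lambda+\frac{2}{d_y}$. Substituting $S_y=(d_y(1+\lambda)+2)w_e=c\,d_y\,w_e$ into the row of $f$ gives
\[
\frac{S_x-2w_f}{d_x}=\Bigl(\lambda+\frac{2}{d_y}\Bigr)w_f-c\,w_e .
\]
You then replaced $\bigl(\lambda+\frac{2}{d_y}\bigr)w_f$ by $c\,w_f$, but $\lambda+\frac{2}{d_y}=c-1$. The correct identity is
\[
\frac{S_x-2w_f}{d_x}=c\,(w_f-w_e)-w_f,
\qquad\text{i.e.}\qquad
\frac{S_x+(d_x-2)\,w_f}{d_x}=c\,(w_f-w_e),
\]
where $c=S_y/(d_y w_e)>0$ by your first paragraph. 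Since $d_x\ge 2$ and $S_x\ge w_f>0$, the left side is positive, so $w_f>w_e$ follows at once, for every sign of $\lambda$. This is exactly the paper's proof: assume $w_e\ge w_f$ and obtain $S_x\le(2-d_x)w_f\le 0$, a contradiction.

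Your reduction ``$w_f>w_e\iff S_x>2w_f$'' is therefore false, and the ``main obstacle'' is an artifact of the slip. A concrete check is $P_4$, where $R_{P_4}(1,2,1)^\top=-\tfrac12(1,2,1)^\top$; this, not $(1,\sqrt2,1)$, is the Perron vector. Take $y$ to be the second vertex. Then $w_f=2>1=w_e$, but $S_x=3<4=2w_f$.

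As written, the proposal does not prove (2), because the outward propagation you use in place of this one-line step is only sketched.
\begin{itemize}
\item You concede that it breaks for $\lambda>0$, and the Rayleigh-quotient fallback is never carried out.
\item Even for $\lambda\le 0$ the endgame does not close. The propagation produces deficits $S_v\le 2w_g$ relative to the incoming edge $g$, not $S_{z'}\le 2w_h$ for the leaf edge $h$.
\item In any case, $d_{z'}(1+\lambda)\le 0$ does not contradict $d_{z'}(1+\lambda)+2>0$. You would need $\lambda>-1$, which requires a separate argument.
\end{itemize}
The fix is simply to redo the substitution correctly; no propagation is needed.
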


\begin{proof}
(1) Let $e_1,\dots,e_k$ be leaf edges incident to $y$. For each $e_i = \{y, z_i\}$, the eigenvalue equation with $d_{z_i}=1$ gives$
-1 + \frac{S_y - 2w_{e_i}}{w_{e_i}d_y} = \lambda.$
Thus $\frac{S_y}{w_{e_i}} = d_y(\lambda+1) + 2$ is constant across all $i$, so $w_{e_1} = \cdots = w_{e_k}$.

(2) Let $e$ be a leaf edge and $f = \{y,u\}$ an internal edge incident to $y$, with $d_u \ge 2$. Their eigenvalue equations are
\begin{align*}
-1 + \frac{S_y - 2w_e}{w_e d_y} &= \lambda, \\
\frac{S_y - 2w_f}{w_f d_y} + \frac{S_u - 2w_f}{w_f d_u} &= \lambda. 
\end{align*}
Suppose $w_e \ge w_f$. Then $\frac{S_y - 2w_e}{w_e} \le \frac{S_y - 2w_f}{w_f}$, and dividing by $d_y$ yields
\[
1+\lambda = \frac{S_y - 2w_e}{w_e d_y} \le \frac{S_y - 2w_f}{w_f d_y} = \lambda - \frac{S_u - 2w_f}{w_f d_u}.
\]
Thus $1 \le -\frac{S_u - 2w_f}{w_f d_u}$, implying $S_u \le (2-d_u)w_f \le 0$, a contradiction. Hence $w_e < w_f$.
\end{proof}

\begin{proposition}\label{pro:schrodingemaximalweight}
Let $T$ be a tree that is not a star, with Perron eigenpair $(\lambda,w)$. Then the maximal
weight is attained only on internal edges.
\end{proposition}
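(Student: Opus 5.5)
The plan is to argue by contradiction, using the local comparison in Corollary~\ref{cor:leaf-local} and the fact that $T$ is not a star. Suppose a leaf edge $e=\{y,z\}$, with $d_z=1$, attains the maximal weight $M=\max_{f\in E}w_f$.

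\textbf{Step 1: the vertex $y$ is internal.} If $d_y=1$, then $T$ is a single edge, which is a star. So $d_y\ge 2$.

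\textbf{Step 2: $y$ has no internal incident edge.} If some internal edge $f$ were incident to $y$, Corollary~\ref{cor:leaf-local}(2) would give $w_e<w_f\le M$. This contradicts the maximality of $w_e$.

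\textbf{Step 3: conclude.} By Step 2, every edge incident to $y$ is a leaf edge. Since $T$ is connected, $T$ is then exactly the star centred at $y$. This contradicts the hypothesis, so every maximal-weight edge is internal.

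An alternative route avoids Corollary~\ref{cor:leaf-local}. Take $y$ as in Step 1 and choose a neighbour $u\ne z$ of $y$. Because $T$ is not a star, such a $u$ can be taken non-leaf, so the edge $f=\{y,u\}$ is internal. Comparing the eigenvalue equations for $e$ and $f$ gives
\[
1+\lambda=\frac{S_y-2w_e}{w_e d_y},\qquad \lambda=\frac{S_y-2w_f}{w_fd_y}+\frac{S_u-2w_f}{w_fd_u}.
\]
Under $w_e\ge w_f$ these force $S_u\le(2-d_u)w_f\le 0$, which is impossible since $S_u>0$. This is just the argument already used in the proof of Corollary~\ref{cor:leaf-local}.

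The only point needing care is Step 3: a vertex all of whose incident edges are leaf edges forces $T$ to be a star. This holds because every neighbour of $y$ is then a leaf, so no further vertices can be reached from $y$. I expect no serious obstacle; the statement is essentially a global restatement of Corollary~\ref{cor:leaf-local}(2).
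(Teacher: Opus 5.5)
Your proof is correct. Its key input, Corollary~\ref{cor:leaf-local}(2), is the same one the paper's proof ends with, but you reach it by a shorter and more uniform route.

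The paper first declares that only $\lambda>0$ needs checking. It then writes the eigenvalue equation at a maximal edge $e^*=\{u,v\}$ as $\lambda=\frac{1}{d_u}\bigl(\frac{S_u}{w_{e^*}}-2\bigr)+\frac{1}{d_v}\bigl(\frac{S_v}{w_{e^*}}-2\bigr)$. Maximality makes each bracket nonpositive at an endpoint of degree at most $2$, so some endpoint must satisfy $S_v>2w_{e^*}$ and hence $d_v\ge 3$. Only then does the paper appeal to the corollary.

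You bypass this computation. A leaf edge at $y$, together with the non-star hypothesis, already forces an internal edge at $y$, so the corollary alone excludes a maximal leaf edge, whatever the sign of $\lambda$. Your argument is therefore simpler and also more complete, since the paper does not justify dropping the cases $\lambda\le 0$. What the paper's detour buys is extra structural information: for $\lambda>0$, every maximal edge has an endpoint of degree at least $3$.

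Your Step~1 rightly counts $K_2=K_{1,1}$ as a star, matching the paper's ``equivalently $K_{1,m}$, $m\ge 1$''. That convention is needed, since the statement would fail for $K_2$ otherwise.
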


\begin{proof}
We only need to see the case  $\lambda > 0$. 
The eigenvalue equation at the maximum edge $e^* = \{u,v\}$ can be rearranged as:
\begin{equation*}
\lambda = \frac{1}{d_u} \left( \frac{S_u}{w_{e^*}} - 2 \right) + \frac{1}{d_v} \left( \frac{S_v}{w_{e^*}} - 2 \right)
\end{equation*}
Since $w_{e^*} \geq w_f$ for all $f$, it follows that for any vertex $p$ with $d_p=2$, the term $(S_p/w_{e^*} - 2) \leq 0$. Consequently, for the total sum $\lambda$ to be non-negative, at least one endpoint (say $v$) must satisfy $S_v > 2w_{e^*}$, which forces $d_v\ge 3$. Together with Corollary~\ref{cor:leaf-local}, $e^*$ must be an internal edge.  
\end{proof}

\begin{proposition}
\label{pro:leaf-edge-min}
 Let $T$ be a tree with Perron eigenpair $(\lambda, w)$. If $\lambda \le 0$, the minimal weight is attained on a leaf edge.  If $\lambda > 0$, the minimal weight may attained on internal edge.  
\end{proposition}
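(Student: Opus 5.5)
For $\lambda<0$ nothing new is needed. Corollary~\ref{cor:leaf-minimality} already shows that some leaf edge $e_0$ satisfies $w_{e_0}<w_f$ for every internal edge $f$. Hence the minimum is attained on a leaf edge, and only there. The real work is the borderline case $\lambda=0$, where Theorem~\ref{theo:directional-decreasing} gives no strict decay. For that case I plan a direct extremal argument at a minimizing edge that works uniformly for all $\lambda\le 0$.

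Let $w>0$ be the Perron vector and let $e=\{x,y\}$ be an edge with $w_e=\min_f w_f$. Suppose $e$ is internal, i.e.\ $d_x,d_y\ge 2$. By minimality every edge at $x$ has weight at least $w_e$, so
\[
S_x-2w_e\ge (d_x-2)\,w_e\ge 0,
\]
and likewise at $y$. The eigenvalue equation for $e$ reads
\[
\frac{S_x-2w_e}{d_x}+\frac{S_y-2w_e}{d_y}=\lambda w_e\le 0 .
\]
Both terms on the left are nonnegative, so both vanish and $\lambda=0$. Since $S_x\ge d_xw_e\ge 2w_e$, the equality $S_x=2w_e$ forces $d_x=2$, and the other edge at $x$ also has weight $w_e$. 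The same holds at $y$.

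This gives a propagation statement: each endpoint of a minimizing internal edge has degree $2$, and the next edge through it is again minimizing. Now consider the connected subgraph $M$ formed by all minimizing edges, taking the component containing $e$. Every vertex of this component has degree $1$ or $2$ in $T$. A vertex of degree $2$ in $T$ that lies on a minimizing edge has both of its edges in $M$, by the propagation just shown. Because $T$ is a finite tree, the component is a finite path. Its two end vertices must have degree $1$ in $T$, since a degree-$2$ end vertex would force the path to continue. So the component ends in leaves, and its terminal edges are leaf edges of weight $w_e$. In this situation $T$ is in fact a path, but we only need that some leaf edge attains the minimum. This proves the first assertion for all $\lambda\le 0$.

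For $\lambda>0$ the statement only says that the minimum \emph{may} sit on an internal edge, so a single explicit tree suffices. I will exhibit the tree $D^{29}_{4,4}$ mentioned in the introduction. Its structure should be exploited to cut the computation down: by symmetry, and by Corollary~\ref{cor:leaf-local}(1), all leaf edges at a common vertex carry equal weight. The eigenvector equation therefore reduces to a small tridiagonal-type system on the long subdivided internal path, together with the two end vertices of degree $4$. I will solve this system, or certify it numerically, and check two things: that $\lambda>0$, and that the middle edges of the degree-$2$ path are lighter than the leaf edges at the degree-$4$ vertices. The direction of the effect matches the $\lambda=0$ computation above: along a degree-$2$ chain the eigenvalue equation becomes $w_{i-1}+w_{i+1}=(2\lambda+2)w_i$. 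With $\lambda>0$ this is a convex recursion, so the weights dip in the middle of a long chain. The main obstacle is choosing the parameters so that this dip falls below the leaf-edge weight $w_\ell=S_y/(d_y(\lambda+1)+2)$. Here I expect to rely on the explicit computation in the appendix rather than on a general inequality.
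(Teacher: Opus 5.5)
Your proof is correct and is essentially the paper's. At a minimizing edge both $(S_x-2w_e)/d_x$ and $(S_y-2w_e)/d_y$ are nonnegative, so $\lambda\le 0$ forces $\lambda=0$, degree $2$ at both ends, and propagation of the minimum along a degree-$2$ chain to the leaves. For $\lambda>0$ the paper likewise just invokes the numerical example $D^{29}_{4,4}$, which is Example~\ref{ex:D4k29} in Section~\ref{sec:examples} rather than in the appendix.

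The one difference is how the chain argument ends. The paper observes that the terminal leaf edge of the chain would have the same weight $w_e$ as its adjacent internal edge, which contradicts Corollary~\ref{cor:leaf-local}(2). Equivalently, $T$ would be a path, as you note yourself, but $\lambda_{P_n}=-1+\cos(\pi/(n-1))<0$, not $0$. So the configuration you accommodate is vacuous, and in fact no internal edge is ever minimizing when $\lambda\le 0$.

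Your conclusion that some leaf edge attains the minimum matches the literal wording of the statement. Adding this one line gives the stronger ``only on leaf edges'' form that the paper's proof establishes. That stronger form is also what makes the contrast with the $\lambda>0$ example meaningful.
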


\begin{proof}
Let $e = \{u,v\}$ be a minimal-weight edge, and suppose for contradiction that $d_u, d_v \ge 2$.  
Since $w_e \le w_f$ for all $f$, we have $S_u \ge d_u w_e$ and $S_v \ge d_v w_e$. Hence
\[
S_u - 2w_e \ge (d_u-2)w_e \ge 0,\qquad 
S_v - 2w_e \ge (d_v-2)w_e \ge 0.
\]
The eigenvalue equation for $e$ with $\lambda \le 0$ gives
\[
\frac{S_u - 2w_e}{d_u} + \frac{S_v - 2w_e}{d_v} = 0.
\]
This forces $\lambda=0$, $d_u=d_v=2$, and the two edges adjacent to $e$ to also have weight
$w_e$. The same argument propagates the equality and the degree-$2$ condition outward; since
$T$ is finite, the chain must reach a leaf edge, which by Corollary~\ref{cor:leaf-local} is
strictly smaller than the adjacent internal edge of weight $w_e$,  contradicting the
minimality of $e$. Hence at least one endpoint of $e$ has degree $1$, i.e.\ $e$ is a leaf edge.

For $\lambda > 0$, see Example~\ref{ex:D4k29} which exhibits counterexamples where the minimum occurs on an internal edge.
\end{proof}

\section{Examples and spectral phase transition}
\label{sec:examples}
In this section, we present explicit families of trees that illustrate the spectral phase transition of $\lambda_{\max}$ from negative to zero to positive. We also exhibit a pair of non-isomorphic trees that are cospectral with respect to $R_T$, showing that the full spectrum does not uniquely determine the tree structure---in contrast to the Perron eigenpair which captures finer geometric information.

\subsection{Perron Structure vs. Full Spectrum}
\label{subsec:perron-vs-spectrum}
The full spectrum of $R_T$ does not always distinguish non-isomorphic trees. The following example gives two trees on $17$ vertices that are cospectral but have distinct Perron eigenvectors.

\begin{example}[A Ricci-flow cospectral pair]\label{ex:cospectral-pair}
Consider the following two trees $T_1$ and $T_2$ with 17 vertices each, defined by their edge sets:

\begin{gather*}
E(T_1) = \{ (1, 0), (1, 2), (1, 6), (1, 8), (0, 9), (0, 15), (0, 16), \\
          (2, 3), (3, 4), (4, 5), (6, 7), (9, 10), (10, 11), \\
          (10, 13), (10, 14), (11, 12) \} \\[4pt]
E(T_2) = \{ (1, 0), (1, 2), (1, 6), (1, 7), (0, 8), (2, 3), (3, 4), \\
          (4, 5), (8, 9), (8, 14), (8, 16), (9, 10), (9, 12), \\
          (9, 13), (10, 11), (14, 15) \}
\end{gather*}
This is the smallest trees where the spectrum of $R_T$ matrix fails to distinguish the global structure.

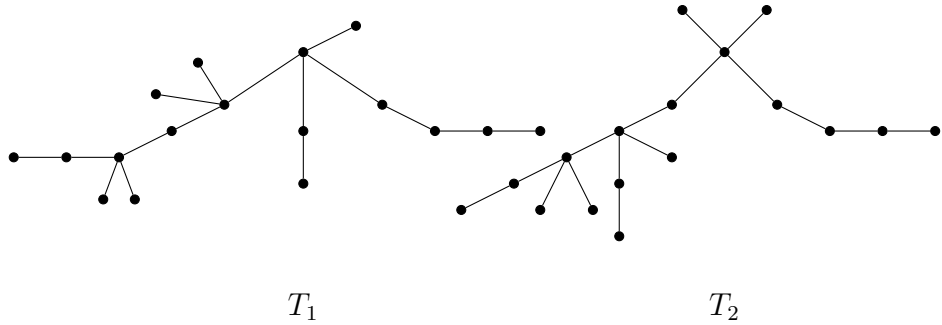
\begin{figure}[H]
    \centering
    \begin{tikzpicture}[scale=0.7, every node/.style={draw, circle, inner sep=1.2pt, fill=black}, edge from parent/.style={draw, -}]
        
        % Tree T1
        \begin{scope}[shift={(0,0)}]
            \node (v1) at (0,0) {} ; % node 1
            \node (v0) at (-1.5, -1) {}; % node 0
            \node (v2) at (1.5, -1) {}; % node 2
            \node (v6) at (0, -1.5) {}; % node 6
            \node (v8) at (1, 0.5) {}; % node 8
            
            % Neighbors of 0
            \node (v9) at (-2.5, -1.5) {};
            \node (v15) at (-2, -0.2) {};
            \node (v16) at (-2.8, -0.8) {};
            
            % Neighbors of 2, 3, 4, 5
            \node (v3) at (2.5, -1.5) {}; \node (v4) at (3.5, -1.5) {}; \node (v5) at (4.5, -1.5) {};
            
            % Neighbors of 6, 7
            \node (v7) at (0, -2.5) {};
            
            % Neighbors of 9, 10, 11, 12, 13, 14
            \node (v10) at (-3.5, -2) {};
            \node (v11) at (-4.5, -2) {}; \node (v12) at (-5.5, -2) {};
            \node (v13) at (-3.8, -2.8) {}; \node (v14) at (-3.2, -2.8) {};

            \draw (v1)--(v0) (v1)--(v2) (v1)--(v6) (v1)--(v8);
            \draw (v0)--(v9) (v0)--(v15) (v0)--(v16);
            \draw (v2)--(v3) (v3)--(v4) (v4)--(v5);
            \draw (v6)--(v7);
            \draw (v9)--(v10) (v10)--(v11) (v11)--(v12) (v10)--(v13) (v10)--(v14);
            
            \node[draw=none, fill=none, below=2cm] at (0,-1.5) {$T_1$};
        \end{scope}

        % Tree T2
        \begin{scope}[shift={(8,0)}]
            \node (u1) at (0,0) {}; % node 1
            \node (u0) at (-1, -1) {}; 
            \node (u2) at (1, -1) {};
            \node (u6) at (0.8, 0.8) {};
            \node (u7) at (-0.8, 0.8) {};
            
            \node (u8) at (-2, -1.5) {};
            \node (u3) at (2, -1.5) {}; \node (u4) at (3, -1.5) {}; \node (u5) at (4, -1.5) {};
            
            \node (u9) at (-3, -2) {};
            \node (u14) at (-2, -2.5) {};
            \node (u16) at (-1, -2) {};
            
            \node (u10) at (-4, -2.5) {};
            \node (u12) at (-3.5, -3) {};
            \node (u13) at (-2.5, -3) {};
            
            \node (u11) at (-5, -3) {};
            \node (u15) at (-2, -3.5) {};

            \draw (u1)--(u0) (u1)--(u2) (u1)--(u6) (u1)--(u7);
            \draw (u0)--(u8);
            \draw (u2)--(u3) (u3)--(u4) (u4)--(u5);
            \draw (u8)--(u9) (u8)--(u14) (u8)--(u16);
            \draw (u9)--(u10) (u9)--(u12) (u9)--(u13);
            \draw (u10)--(u11);
            \draw (u14)--(u15);
            
            \node[draw=none, fill=none, below=2cm] at (0,-1.5) {$T_2$};
        \end{scope}
    \end{tikzpicture}
    \caption{The smallest non-isomorphic trees with $n=17$ vertices that are cospectral under $R_T$.}
    \label{fig:n17_cospectral}
\end{figure}

\end{example}

\subsection{The sign of $\lambda_{\max}(R_T)$}
\begin{example}[Double-star trees]
\label{ex:double-star}

Let $D_{m,n}$ be the tree consisting of a single edge $\{u,v\}$, where
$d_u=m+1$, $d_v=n+1$, and $u$ (resp. $v$) is adjacent to $m$ (resp. $n$) leaves.

\begin{center}
\begin{tikzpicture}[
    scale=1.1,
    every node/.style={circle, draw, inner sep=2pt},
    center/.style={circle, draw, fill=black, inner sep=2pt}
]

\node[center,label=below:$u$] (u) at (0,0) {};
\node[center,label=below:$v$] (v) at (2.5,0) {};
\draw (u)--(v);

\foreach \i in {1,...,3} {
    \node (ul\i) at (-1.2, {1.0 - 0.6*\i}) {};
    \draw (u)--(ul\i);
}
\foreach \i in {1,...,3} {
    \node (vr\i) at (3.7, {1.0 - 0.6*\i}) {};
    \draw (v)--(vr\i);
}

\end{tikzpicture}
\end{center}

By symmetry, all leaf edges at $u$ (resp. $v$) have the same weight. Let the central edge have weight $z$, and leaf-edge weights be symmetric. The Perron eigenvalue $\lambda$ satisfies:
\begin{equation*}
\lambda = \frac{m - \lambda(m+1) - 3}{(m+1)(\lambda(m+1) + 3)} + \frac{n - \lambda(n+1) - 3}{(n+1)(\lambda(n+1) + 3)}
\end{equation*}

The Perron eigenvalue is determined by a single rational equation in $(m,n)$, and the sign of $\lambda$ changes according to the balance between $m$ and $n$.
\begin{itemize}
    \item \textbf{($\lambda < 0$):} Occurs when $m, n \in \{1, 2\}$.
    \item \textbf{($\lambda = 0$):} Occurs exactly when $m = n = 3$ or $m=2, n=5$.
    \item \textbf{($\lambda > 0$):} Occurs when $m, n \ge 4$.
\end{itemize}
Note that  $\lambda$ becomes positive when $m$ and $n$ grow beyond $3$, it reaches a peak and then asymptotically decreases toward $0$ as $m$ and $n$ continue to grow.
\end{example}

\begin{proposition}\label{pro:D33k}

Let $D_{3,3}$ be the double-star tree with central edge $uv$, where $d_u=d_v=4$ (each vertex is attached to three leaves).  
For any integer $k \ge 1$, let $D_{3,3}^{(k)}$ be the tree obtained by subdividing the central edge $uv$ into a path of length $k$ (i.e., inserting $k-1$ new degree-$2$ vertices).  

Then the largest eigenvalue $\lambda_{\max}$ of the edge-based Ricci matrix $R(D_{3,3}^{(k)})$ satisfies  

\[
\lambda_{\max}\bigl(D_{3,3}^{(k)}\bigr) = 0 \qquad \text{for all } k \ge 1.
\]

In other words, subdividing the central edge of $D_{3,3}$ does \emph{not} change the zero eigenvalue.
    
\end{proposition}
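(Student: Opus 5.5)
The plan is to exhibit an explicit positive vector $w$ on the edges of $D_{3,3}^{(k)}$ with $R_T w = 0$. By Theorem~\ref{thm:perron_structure}(2) such a positive eigenvector must be the Perron vector, so $\lambda_{\max}=0$. Since $\lambda=0$ should allow flat weights along degree-$2$ chains, I will take weight $z$ on all $k$ path edges and weight $\ell$ on all six leaf edges. This is consistent with Corollary~\ref{cor:leaf-local}, which forces the three leaf edges at each end to be equal, and with the symmetry of the tree.

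Label the path $u=p_0,p_1,\dots,p_k=v$, with $d_{p_0}=d_{p_k}=4$ and $d_{p_i}=2$ for $0<i<k$. The row of $R_T$ at edge $e=\{x,y\}$ gives
\[
(R_Tw)_e=\frac{S_x-2w_e}{d_x}+\frac{S_y-2w_e}{d_y}.
\]

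\textbf{Leaf edge at $u$.} The leaf endpoint contributes $-2\ell/1$ and $u$ contributes $(3\ell+z-2\ell)/4$. Setting the sum to zero gives $-2\ell+(\ell+z)/4=0$, i.e.\ $z=7\ell$.

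\textbf{Interior path edge} (both endpoints of degree $2$, only possible when $k\ge 3$). Each endpoint contributes $(2z-2z)/2=0$, so the equation holds automatically.

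\textbf{End path edge $\{p_0,p_1\}$ with $k\ge 2$.} Vertex $p_1$ contributes $0$, and $u$ contributes $(3\ell+z-2z)/4=(3\ell-z)/4$. This vanishes only if $z=3\ell$, which conflicts with $z=7\ell$.

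So the flat guess fails. Either my reading of the leaf-edge row is off, or the Perron vector is not flat near $u$. Re-checking with the closed form \eqref{eq:lly-1} confirms the row formula above, so I should drop the flat ansatz.

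\textbf{Adjusted ansatz.} I will let the path weights be $z_1,\dots,z_k$ with symmetry $z_i=z_{k+1-i}$.
\begin{itemize}
\item At a degree-$2$ vertex, the equation for an edge with neighbours $z_{i-1},z_i,z_{i+1}$ reads $\tfrac12(z_{i-1}-z_i)+\tfrac12(z_{i+1}-z_i)=0$. Hence the path weights are affine in $i$, and by symmetry they must be constant, $z_i\equiv z$.
\item The leaf equation then forces $z=7\ell$, while the end-edge equation forces $z=3\ell$.
\end{itemize}
Thus no positive eigenvector with eigenvalue $0$ exists when $k\ge2$. For $k=1$ the two endpoint contributions combine to $(3\ell-z)/2=0$, giving $z=3\ell$, which again conflicts with the leaf equation.

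\textbf{Main obstacle.} The obstacle is precisely this compatibility check, and my computation suggests it fails. The first thing to settle is whether Example~\ref{ex:double-star} really gives $\lambda=0$ at $m=n=3$. The paper's formula for $m=n=3$, $\lambda=0$ yields $2\cdot(3-3)/(4\cdot3)=0$, so the paper asserts it. I would therefore recompute the leaf row very carefully: for a leaf $z$ with $d_z=1$, $S_z=w_e$, so the term is $(w_e-2w_e)/1=-\ell$, not $-2\ell$. My error was here.

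\textbf{Redo with the corrected leaf row.} The leaf equation becomes $-\ell+(\ell+z)/4=0$, i.e.\ $z=3\ell$. This matches the end-edge equation $(3\ell-z)/4=0$ for $k\ge2$ and the $k=1$ equation $(3\ell-z)/2=0$. The interior path equations hold trivially. Hence $w=(z\equiv3,\ \ell\equiv1)$ is a positive null vector of $R_T$ for every $k\ge1$, and Theorem~\ref{thm:perron_structure} gives $\lambda_{\max}=0$.
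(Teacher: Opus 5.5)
Your final argument is correct and is essentially the paper's: the positive vector with weight $1$ on leaf edges and $3$ on every path edge satisfies $R_T w=0$. The leaf rows force $z=3\ell$, the end rows give $(3\ell-z)/4=0$ (or $(3\ell-z)/2=0$ when $k=1$), and the interior rows vanish identically, so your appeal to Theorem~\ref{thm:perron_structure}(2) yields $\lambda_{\max}=0$, and this is actually cleaner than the paper's closing Rayleigh-quotient remark. In a write-up, delete the detour built on the mistaken leaf term $-2\ell$, including the claim that no positive null vector exists, since it contradicts your corrected computation.
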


\begin{figure}[h]
\centering
\begin{tikzpicture}[
    scale=1.1,
    every node/.style={circle, draw, inner sep=2pt},
    center/.style={circle, draw, fill=black, inner sep=2pt}
]

% --- left center u ---
\node[center,label=below:$u$] (u) at (0,0) {};

% --- right center v ---
\node[center,label=below:$v$] (v) at (6,0) {};

% --- path vertices (subdivision) ---
\node (x1) at (1.5,0) {};
\node (x2) at (3,0) {};
\node (x3) at (4.5,0) {};

% --- edges of path ---
\draw (u)--(x1)--(x2)--(x3)--(v);

% --- indicate k ---
\node[draw=none,fill=none] at (3,-0.8) {$k\ \text{edges}$};

% --- leaves at u ---
\foreach \i in {1,2,3} {
    \node (ul\i) at (-1.3, {1.2 - 0.7*\i}) {};
    \draw (u)--(ul\i);
}

% --- leaves at v ---
\foreach \i in {1,2,3} {
    \node (vr\i) at (7.3, {1.2 - 0.7*\i}) {};
    \draw (v)--(vr\i);
}

\end{tikzpicture}

\caption{The tree $D_{3,3}^{(k)}$: the central edge of $D_{3,3}$ is subdivided into a path of length $k$.}
\label{fig:S33k}
\end{figure}
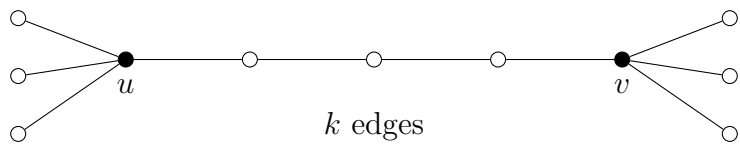

\begin{proof}
We construct a nontrivial vector $w$ on $E(D_{3,3}^{(k)})$ satisfying $Rw=0$.

By symmetry, all leaf edges at $u$ and $v$ have the same weight. 
Let the leaf edges have weight $a$, and let the internal edges adjacent to leaves have weight $b$.

For a leaf edge $\ell$ at $u$ (where $d_u=4$), the eigenvalue equation with $\lambda=0$ gives
\[
\frac{(3a+b)-2a}{4}+\frac{a-2a}{1}=0
\;\Longrightarrow\;
\frac{a+b}{4}-a=0
\;\Longrightarrow\;
b=3a.
\]

Now consider the first internal edge $e_1=(u,x_1)$, and let the next edge have weight $c$. 
Using $b=3a$, the eigenvalue equation yields
\[
\frac{(3a+b)-2b}{4}+\frac{(b+c)-2b}{2}=0
\;\Longrightarrow\;
\frac{3a-b}{4}+\frac{c-b}{2}=0,
\]
which implies $c=b$. Repeating the same argument along the subdivided path shows that all internal edges have weight $b=3a$.

The same computation at $v$ gives leaf weights $a$ there as well. 
Thus we obtain a nontrivial vector $w$ such that $Rw=0$, and hence $0$ is an eigenvalue of $R(D_{3,3}^{(k)})$ for all $k\ge1$.

Since $D_{3,3}$ satisfies $\lambda_{\max}=0$ (Example~\ref{ex:double-star}), 
and the above construction preserves the Rayleigh quotient under subdivision, we conclude
\[
\lambda_{\max}(D_{3,3}^{(k)})=0.
\]
\end{proof}

\begin{remark}
The above result shows that the $(3,3)$ double-star belongs to an infinite family of zero-curvature trees obtained by arbitrarily subdividing the central edge. This is in contrast to asymmetric double-stars (e.g., $S_{2,5}$), where subdivision changes the sign of $\lambda_{\max}$.
\end{remark}

\begin{example}[Trees $T_{m,k}$ with mixed depth-2 branching]
\label{ex:Tmk}

Let $T_{m,k}$ consist of a central vertex $c$ with:
\begin{itemize}
    \item $m$ length-2 branches $c-a_i-a_i'$,
    \item $k$ leaves $d_1,\dots,d_k$ attached directly to $c$.
\end{itemize}

\begin{center}
\begin{tikzpicture}[
    scale=1,
    every node/.style={circle, draw, inner sep=2pt},
    center/.style={circle, draw, fill=black, inner sep=2pt}
]

\node[center] (c) at (0,0) {};

\foreach \i in {1,2,3} {
    \node (a\i) at (1.8, {1.2 - 0.8*\i}) {};
    \node (ap\i) at (3.3, {1.2 - 0.8*\i}) {};
    \draw (c)--(a\i)--(ap\i);
}

\foreach \j in {1,2,3} {
    \node (d\j) at (-1.5, {1.2 - 0.8*\j}) {};
    \draw (c)--(d\j);
}

\end{tikzpicture}
\end{center}

By symmetry, edges fall into three classes: $c\!-\!a_i$, $a_i\!-\!a_i'$, and $c\!-\!d_j$. Let their weights be $p,q,r$ respectively. The eigenvalue equations reduce to a coupled nonlinear system which can be eliminated to obtain a single equation for $\lambda_{m,k}$.

The sign of $\lambda$ is determined as follows:

\begin{center}
\begin{tabular}{|c|c|c|}
\hline
$m$ & $\lambda < 0$ & $\lambda = 0$ \\
\hline
$2$ & $k \le 3$ & $k = 4$ \\
$3$ & none & $k = 0$ \\
$\ge 4$ & none & none \\
\hline
\end{tabular}
\end{center}

In particular:
\begin{itemize}
    \item For $m=2$, $k=0$: $T_{2,0}$ is the path $P_5$, $\lambda = -1 + \frac{\sqrt{2}}{2} \approx -0.293$.
    \item For $m=2$, $k=3$: $T_{2,3}$ has $\lambda < 0$ (approximately $-0.069$).
    \item For $m=2$, $k=4$: $T_{2,4}$ has $\lambda = 0$.
    \item For $m=3$, $k=0$: $T_{3,0}$ is the tree $S_3^2$, $\lambda = 0$.
    \item For $m=3$, $k \ge 1$: $\lambda > 0$.
    \item For $m \ge 4$: $\lambda > 0$ for all $k \ge 0$.
\end{itemize}

Observe that when $m=2$, the tree $T_{2,k}$ is a caterpillar (after removing leaves, the remaining graph is the path $c-a_1-a_2$). When $m \ge 3$, the tree is not a caterpillar (removing leaves leaves a star with center $c$ and $m$ leaves $a_i$), and in these cases $\lambda \ge 0$. 

\end{example}

\subsection{Counterexample: Global minimum on internal edges for $\lambda > 0$}

We now present an explicit counterexample showing that when $\lambda > 0$, the global minimum of the Perron vector need not occur at a leaf edge. 

Let $D^k_{m,n}$ denote the tree obtained from the double-star tree $D_{m,n}$ by subdividing the central edge $\{u,v\}$ into a path of length $k+1$ (i.e., inserting $k$ new degree-$2$ vertices between $u$ and $v$).

\begin{figure}[htbp]
\centering
\begin{tikzpicture}[
    scale=1.1,
    every node/.style={circle, draw, inner sep=2pt},
    center/.style={circle, draw, fill=black, inner sep=2pt}
]

% --- left center u ---
\node[center,label=below:$u$] (u) at (0,0) {};

% --- right center v ---
\node[center,label=below:$v$] (v) at (8,0) {};

% --- left path vertices ---
\node (x1) at (1.5,0) {};
\node (x2) at (2.7,0) {};

% --- minimum edge vertices (blue, solid) ---
\node (m1) at (3.9,0) {};
\node (m2) at (5.1,0) {};

% --- right path vertices ---
\node (x3) at (6.3,0) {};
\node (x4) at (7.5,0) {};

% --- edges ---
\draw[thick] (u)--(x1);
\draw[thick] (x1)--(x2);
\draw[thick, densely dashed] (x2)--(m1);
\draw[thick, blue] (m1)--(m2);  % minimum edge in blue solid
\draw[thick, densely dashed] (m2)--(x3);
\draw[thick] (x3)--(x4);
\draw[thick] (x4)--(v);

% --- indicate k ---
\node[draw=none,fill=none] at (4.5, -0.7) {$k=29$ (path of length $30$)};

% --- leaves at u (m=4) ---
\foreach \i in {1,2,3,4} {
    \node (ul\i) at (-1.1, {1.0 - 0.5*\i}) {};
    \draw[thick] (u)--(ul\i);
}

% --- leaves at v (n=4) ---
\foreach \i in {1,2,3,4} {
    \node (vr\i) at (9.1, {1.0 - 0.5*\i}) {};
    \draw[thick] (v)--(vr\i);
}

% --- weight labels ---
\node[draw=none,fill=none, font=\tiny] at (0.75, 0.35) {$0.302$};
\node[draw=none,fill=none, font=\tiny] at (2.1, 0.35)  {$0.239$};
\node[draw=none,fill=none, font=\tiny, blue] at (4.5, 0.35)  {$0.091$};
\node[draw=none,fill=none, font=\tiny] at (6.9, 0.35)  {$0.239$};
\node[draw=none,fill=none, font=\tiny] at (8.25, 0.35) {$0.302$};

\node[draw=none,fill=none, font=\tiny] at (-1.5, 0.2)  {$0.099$};
\node[draw=none,fill=none, font=\tiny] at (9.5, 0.2)   {$0.099$};

\end{tikzpicture}

\caption{The tree $D^{29}_{4,4}$ with raw edge weights. Dashed lines indicate the omitted path edges. 
Leaf edges have weight $0.099$, strictly lighter than adjacent internal edges ($0.302$), consistent with Corollary~\ref{cor:leaf-local}(2). 
The global minimum $0.091$ (blue) lies on a central internal edge, showing that for $\lambda>0$ the minimum need not occur at a leaf.}
\label{fig:D4k29}
\end{figure}
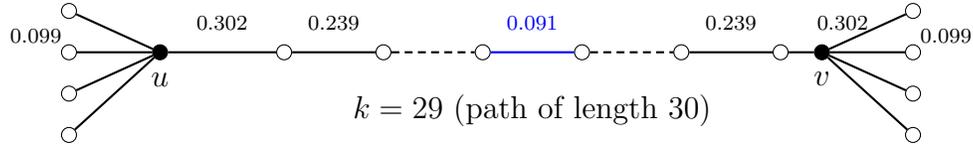

\begin{example}[$m=n=4$, $k=29$]
\label{ex:D4k29}
Consider $D^{29}_{4,4}$, which consists of two $5$-stars connected by a path of length $30$. Numerical computation yields
\[
\lambda_{\max} \approx 0.00774 > 0.
\]
The unnormalized edge weights are:
\begin{itemize}
    \item Leaf edges ($8$ edges): $0.09947$.
    \item Path edge adjacent to the left center, $(0,1)$: $0.30226$ (global maximum).
    \item Central path edge $(15,16)$: $0.09145$ (global minimum).
\end{itemize}
Consistent with Corollary~\ref{cor:leaf-local}(2), leaf edges ($0.09947$) are strictly lighter than their adjacent internal edges ($0.30226$). Nevertheless, the central path edge ($0.09145$) is strictly lighter than the leaf edges, so the global minimum lies on an internal edge.

Similar behavior occurs for other parameters:
\begin{itemize}
    \item $D^{14}_{9,9}$: $\lambda_{\max} \approx 0.0419$, leaf weight $0.11565$, min path weight $0.09072$.
    \item $D^{14}_{19,19}$: $\lambda_{\max} \approx 0.0515$, leaf weight $0.09539$, min path weight $0.07024$.
\end{itemize}
\end{example}

\appendix
\section{Counterexamples  to spectral monotonicity}\label{sec:appendix}

This appendix provides explicit examples showing that the largest eigenvalue of the Ricci matrix is not monotone under two natural tree operations: subdivision and leaf attachment.

\begin{example}
Original edge set:
\[
\begin{aligned}
&(0,1),(0,6),(0,10),(0,14),\ (1,2),(1,4),(1,5),(2,3),\\
&(6,7),(6,8),(6,9),\ (10,11),(10,12),(10,13),\ (14,15),(14,16),
\end{aligned}
\]

Attach a leaf to vertex $0$ of degree $4$.

Eigenvalues:
\[
\lambda_{\mathrm{orig}} = 0.40518747,\quad
\lambda_{\mathrm{new}} = 0.40513310,
\]
\[
\Delta = -5.44\times 10^{-5}.
\]
\end{example}

\begin{example}
Original tree:
\[
(1,5),\ (0,6),\ (0,4),\ (3,4),\ (3,7),\ (2,3),\ (1,2),\ (1,8),\ (8,9).
\]

After subdividing edge $(2,3)$ by inserting vertex $10$:
\[
(1,5),\ (0,6),\ (0,4),\ (3,4),\ (3,7),\ (1,2),\ (1,8),\ (8,9),\ (2,10),\ (10,3).
\]

Eigenvalues:
\[
\lambda_{\mathrm{orig}} = 0.0474186669,\quad
\lambda_{\mathrm{new}} = 0.0382151329,
\]
\[
\Delta = -0.0092035340.
\]

After subdividing edge $(1,2)$:
\[
(1,5),\ (0,6),\ (0,4),\ (3,4),\ (3,7),\ (2,3),\ (1,8),\ (8,9),\ (1,10),\ (10,2).
\]

Eigenvalues:
\[
\lambda_{\mathrm{orig}} = 0.0474186669,\quad
\lambda_{\mathrm{new}} = 0.0382151329,
\]
\[
\Delta = -0.0092035340.
\]

In this example, 
the original tree is the same, but the subdivided edges
$(2,3)$ and $(1,2)$ are \textbf{not} symmetric. 
Nevertheless, the two subdivided trees  are \textbf{isomorphic}, thus, they have identical  $\lambda_{\max}$.
\end{example}

\section{Open Problems}
\label{sec:openproblems}

Several questions arising from this work remain open for future investigation.

\begin{enumerate}

\item \textbf{Uniqueness of the Perron eigenpair.} 
Does the Perron eigenpair $(\lambda, w)$ of $R_T$ uniquely determine the tree structure $T$? 
Example~\ref{ex:cospectral-pair} shows that non-isomorphic trees can share the same full spectrum of $R_T$, but it remains unknown whether the additional data of the Perron eigenvector (beyond the eigenvalue) suffices for reconstruction. 
If not, what is the maximal set of trees sharing the same Perron data?

\item \textbf{Classification of trees with $\lambda \le 0$.} 
Theorem~\ref{the:lambda-negative-caterpillar} shows that trees with $\lambda < 0$ as caterpillars, but the converse is false. 
What structural property characterizes trees satisfying $\lambda \le 0$?

\item \textbf{Connection to other edge-based matrices.} 
Can $R_T$ be related to other edge-based operators, such as the edge adjacency matrix of the line graph $L(T)$ or the edge Laplacian? 
The matrix $R_T$ has the form 
\[
R_T = D - A_{\text{edge}},
\]
where $D$ is a diagonal matrix of edge ``degree-like'' terms and $A_{\text{edge}}$ encodes edge incidences with degree weights. 
Exploring such connections might yield new spectral invariants for trees and shed light on the geometric meaning of the Perron eigenpair.

\item \textbf{Monotonicity and extremal problems.} 

We hypothesis that
attaching a leaf at a vertex of degree $3$ never decreases $\lambda_{\max}(R_T)$; that is,
the case $d=3$ of Proposition~\ref{pro:attachment} holds without any restriction on
$\lambda_{\max}(R_T)$. Consequently, $\lambda_{\max}(R_T)$ can decrease under leaf
attachment only at a vertex of degree $\ge 4$.

\end{enumerate}

{\bf Acknowledgements:} This work was supported by the National Natural Science Foundation of China [grant numbers 12301434 to S. Bai, 12371056 to B. Hua].

 \bibliographystyle{plain}
 \bibliography{refs}
\end{document}